\documentclass[11pt]{amsart}

\usepackage{amsmath,amssymb,amsfonts,amsthm}
\usepackage{aliascnt}
\usepackage{bbm}
\usepackage{mathtools}
\usepackage{mathrsfs}
\usepackage{yhmath}

\usepackage[margin=1in]{geometry}
\usepackage[shortlabels]{enumitem}
\setlist{listparindent=\parindent, parsep=0pt}

\usepackage{comment}
\usepackage[dvipsnames]{xcolor}
\colorlet{RED}{red}
\usepackage{hyperref}
\hypersetup{linktocpage,colorlinks,linkcolor=blue,citecolor=magenta}
\usepackage[capitalise]{cleveref}
\crefformat{equation}{(#2#1#3)}

\usepackage{cite}

\theoremstyle{plain}
\newtheorem{thm}{Theorem}[section]
\newaliascnt{prop}{thm}
\newtheorem{prop}[prop]{Proposition}
\aliascntresetthe{prop}
\newtheorem{lemma}{Lemma}[section]

\theoremstyle{definition}
\newaliascnt{mydef}{thm}

\aliascntresetthe{mydef}
\newaliascnt{remark}{thm}
\newtheorem{remark}[remark]{Remark}
\aliascntresetthe{remark}
\crefname{thm}{Theorem}{Theorems}
\Crefname{thm}{Theorem}{Theorems}
\crefname{prop}{Proposition}{Propositions}
\Crefname{prop}{Proposition}{Propositions}
\crefname{lemma}{Lemma}{Lemmas}
\Crefname{lemma}{Lemma}{Lemmas}
\crefname{corollary}{Corollary}{Corollaries}
\Crefname{corollary}{Corollary}{Corollaries}
\crefname{mydef}{Definition}{Definitions}
\Crefname{mydef}{Definition}{Definitions}
\crefname{remark}{Remark}{Remarks}
\Crefname{remark}{Remark}{Remarks}

\numberwithin{equation}{section}

\DeclareMathOperator{\sgn}{sgn}

\newcommand{\Z}{\mathbb{Z}}
\newcommand{\R}{\mathbb{R}}
\newcommand{\T}{\mathbb{T}}

\newcommand{\Pc}{\mathcal{P}}
\newcommand{\cI}{\mathcal{I}_+}
\newcommand{\indic}{\mathbbm{1}}

\newcommand{\DO}{\mathrm{DO}}

\newcommand{\wh}{\widehat}

\newcommand{\Hrel}[2]{H(#1\mid#2)}

\title[Phase Transitions in Multimodal Models]{Phase transitions in Doi--Onsager, noisy transformer, and other multimodal models}

\author[K. Mun]{Kyunghoo Mun}
\address{Kyunghoo Mun, Carnegie Mellon University, Department of Mathematical Sciences, Pittsburgh, PA}
\email{kmun@andrew.cmu.edu}

\author[M. Rosenzweig]{Matthew Rosenzweig}
\address{Matthew Rosenzweig, Carnegie Mellon University, Department of Mathematical Sciences, Pittsburgh, PA}
\email{mrosenz2@andrew.cmu.edu}
\thanks{M.R. was supported by NSF grants DMS-2441170, DMS-2345533, DMS-2342349.}

\begin{document}

\begin{abstract}
We study phase transitions for repulsive-attractive mean-field free energies on the circle. For a $\frac{1}{n+1}$-periodic interaction whose Fourier coefficients satisfy a certain decay condition, we prove that the critical coupling strength $K_c$ coincides with the linear stability threshold $K_\#$ of the uniform distribution and that the phase transition is continuous, in the sense that the uniform distribution is the unique global minimizer at criticality. The proof is based on a sharp coercivity estimate for the free energy obtained from the constrained Lebedev--Milin inequality. 

We apply this result to three motivating models for which the exact value of the phase transition and its (dis)continuity in terms of the model parameters was not fully known. For the two-dimensional \emph{Doi--Onsager} model $W(\theta)=-|\sin(2\pi\theta)|$, we prove that the phase transition is continuous at $K_c=K_\#=3\pi/4$. For the \emph{noisy transformer} model $W_\beta(\theta)=(e^{\beta\cos(2\pi\theta)}-1)/\beta$, we identify the sharp threshold $\beta_*$ such that $K_c(\beta) = K_\#(\beta)$ and the phase transition is continuous for $\beta \leq \beta_*$, while $K_c(\beta)<K_\#(\beta)$ and the phase transition is discontinuous for $\beta > \beta_*$. We also obtain the corresponding sharp dichotomy for the noisy \emph{Hegselmann--Krause} model $W_{R}(\theta) = (R-2\pi|\theta|)_{+}^2$ . 
\end{abstract}

\maketitle

\section{Introduction}\label{sec:intro}
In this note, we consider the minimization of the \emph{free energy}
\begin{align}\label{eq:free-energy}
\mathcal{F}_K(q)
:=\int_\T \log {q} \, dq(\theta) \;-
K\iint_{\T \times \T} W(\theta - \theta')\, dq(\theta) dq(\theta')
\end{align}
defined on Borel probability measures $q \in \mathcal{P}(\T)$ on the unit circle, identified as $\T\coloneqq [-\frac12,\frac12)$ with periodic boundary conditions. Here, $K\ge 0$ is the \emph{coupling constant/interaction strength}, and $W$ is the \emph{interaction potential} assumed to be real-valued and even.
If $K=0$, the free energy is exactly the relative entropy $\Hrel{q}{q_u} \coloneqq \int_{\T} \log (q/q_u)  \, dq$, which is uniquely minimized by the \emph{uniform distribution} $q_u \coloneqq 1$. For an interaction with an attractive part, by which we mean $W$ has at least one positive Fourier coefficient, as $K$ increases, a new nonuniform minimizer appears due to the dominance of the interaction energy. This change in behavior is called a \textit{phase transition}, which has been mathematically studied for instance in \cite{chayesMcKeanVlasovEquationFinite2010,carrilloLongtimeBehaviourPhase2020,delgadino2023phase} and numerous other works. More precisely, there exists a critical coupling $K_c>0$ such that\footnote{Note that this definition of phase transition refers to uniqueness of the global minimizer, but not necessarily uniqueness of critical points. This is an important distinction, which we will come back to later below. Since $q_u$ is always a critical point, nonuniqueness of critical points when $K>K_c$ always holds; but in principle, there could be a non-minimizing critical point when $K<K_c$. In the present work, a critical point is understood as a solution of the Euler--Lagrange/Kirkwood--Monroe equation associated to $\mathcal{F}_K$. In general, there are different notions of critical points (see, e.g., \cite{delgadino2023phase} for more discussion on this topic), but in our setting, they all coincide.}
\begin{enumerate} \label{phase transition def}
    \item \label{item: subcritical} in the \emph{subcritical} regime $K< K_{c}$, $q_u$ is the unique minimizer of free energy $\mathcal{F}_K$;
    \item \label{item: critical} in the \emph{critical} regime $K=K_{c}$, $q_{u}$ remains a minimizer of $\mathcal{F}_K$ {(though a priori not necessarily unique)};
    \item \label{item: supercritical} in the \emph{supercritical} regime $K>K_{c}$, there exists a nonuniform minimizer $q \neq q_{u}$ of $\mathcal{F}_K$.
\end{enumerate}
The phase transition is closely related to the stability of the uniform state. We therefore define the \textit{linear stability threshold} $K_\#$ as the value at which the second variation of $\mathcal{F}_K$ at $q_u$ loses positivity (see \eqref{eq:Ksharp} and more generally \cref{sec:contipt}). In general, one has $K_c \leq K_\#$, and we seek conditions that guarantee equality.

Given the existence of a phase transition, an important problem is to determine the exact value of $K_{c}$. This question was studied in \cite{chayesMcKeanVlasovEquationFinite2010,carrilloLongtimeBehaviourPhase2020} and following works by means of the {linear stability threshold} $K_\#$ at $q_u$ (see \cref{phase trans continuity prop} below). It turns out that this problem is closely related to the \textit{continuity of the phase transition}; a phase transition is said to be \textit{continuous} at the critical interaction strength $K_{c},$ if a global minimizer $q_{K_c}$ is unique and for any family of global minimizers $\{q_{K}: K > K_{c}\}$,
\begin{equation}\label{eq:ptcont_def}
        \limsup_{K \to K_{c} +} \|q_{K} - q_{\operatorname{unif}} \|_{L^1} = 0.
\end{equation}
Otherwise, the phase transition is \textit{discontinuous}.  Despite the comparisons made in previous works, a complete characterization of when $K_{c} = K_{\#}$ remains open for multimodal interaction potentials $W$, with our recent work \cite{mun_phase_2026} providing such a characterization for bimodal interactions. 

From a dynamical perspective, critical points of \eqref{eq:free-energy} are equivalent to the stationary solutions of the \textit{McKean--Vlasov} PDE  
\begin{align} \label{MV equation}
\partial_{t} q_{t} = \frac{1}{2} \partial_{\theta}^{2} q_{t}- K \partial_{\theta}  \big(q_t \ \partial_{\theta}(W * q_{t}) \big) = - \frac12\partial_{\theta} \left[ q_t \partial_{\theta} \Big( \frac{\delta \mathcal{F}}{\delta q}(q_t)  \Big)\right],
\end{align}
which is the $2$-Wasserstein gradient flow of $\mathcal{F}_K$. Equation \eqref{MV equation} can also be interpreted as a \emph{mean-field limit} for a system of $N$ interacting particles given by
\begin{align}\label{Dynamics}
d\theta_{i}=  \frac{K}{N} \sum_{j=1}^{N}  W'(\theta_i - \theta_j) \ dt + d B_{i}, \qquad  i\in[N],
\end{align}
where $(B_i)_{i=1}^{N}$ are i.i.d. Brownian motions on $\T$. The mean-field limit refers to the convergence, as $N \to \infty$, of the {\it empirical measure} $q_{N,t}\coloneqq \frac1N \sum_{i=1}^N \delta_{\theta_i(t)}$
associated to a solution $\Theta_N(t) \coloneqq (\theta_1(t), \dots, \theta_N(t))$ of the system \eqref{Dynamics}. Assuming the initial points $\Theta_N(0)$ are such that $q_{N,0} \underset{N\to \infty}{\longrightarrow} q_{0}$, an application of It\^o's rule leads one to expect that, for $t>0$, $q_{N,t} \underset{N \to \infty}{\longrightarrow} q_t$, where $q_t$ solves the macroscopic \textit{mean-field equation} \eqref{MV equation}. For this reason, it is common to refer to $\mathcal{F}_K$ as the \emph{mean-field/macroscopic free energy} to contrast with the \emph{microscopic free energy} associated to the $N$-particle system \eqref{Dynamics}.

\subsection{Multimodal Interactions}\label{ssec:intro-examples}
Mean-field systems of the above form appear in diverse contexts. Here, we discuss three such examples for which the mathematical study of \eqref{eq:free-energy} and of phase transitions remains limited.

The \emph{Doi--Onsager model (or Onsager model)} describes the orientational statistics of a spatially homogeneous suspension of long, rigid rods by a probability density $q=q(\theta)$ on the circle $\mathbb{S}^{1}$, where $\theta$ parametrizes the rod direction.
In a mean-field approximation, excluded-volume interactions are encoded by an even kernel
\begin{equation} \label{eq:Onsager kernel}
    W_{\DO}(\theta)=-|\sin(2\pi\theta)|=-\frac{2}{\pi}+\frac{4}{\pi}\sum_{\ell=1}^{\infty}\frac{\cos(4\pi\ell\theta)}{4\ell^2-1},
\end{equation}
and equilibrium states are characterized as minimizers (or, more generally,
critical points) of a free energy functional \eqref{eq:free-energy}; see Onsager's classical work \cite{onsagerEffectsShapeColloidal1949} and the Doi--Edwards framework \cite{doiEdwardsTheoryPolymerDynamics1986}. In \cite{symmetricsolutionsonsager}, the authors showed that all stationary solutions {$q$} must be axially symmetric. That is, $q(\theta_0 - \theta) = q(\theta_0+\theta)$ for some $\theta_0$, and there exists a family $(q_{n})_{n=1}^{\infty} \subset \mathcal{P}(\T)$, with corresponding coupling constants $(K_n)_{n=1}^{\infty} \subset (0, \infty)$, such that $q_{n}$ is $\frac{1}{2n}$-periodic and is a critical point of the free energy at $K = K_n$. \cite{upperKonsager} provided a lower bound for the critical interaction strength for a general class of {Lipschitz} interaction potentials $W$, showing that $K_{c} \geq {\pi/4}$ for the Onsager model. \cite{upperonsager2} refined this estimate to $K_{c} \geq \sqrt{\pi}/2$. However, the exact value of $K_c$ for the Doi--Onsager model has remained to our knowledge open, while \cite{transformertry} showed that the transition for the Doi--Onsager model on the hypersphere $\mathbb{S}^{d-1}$ for $d\ge 3$ is discontinuous. See \eqref{eq:ptcont_def} for the definition of continuity of the phase transition.

A more recent example is the \emph{(noisy) transformer model.} Despite the empirical success of large language models, the mathematical underpinnings of the underlying transformer architectures \cite{attention} are not fully understood. Geshkovski et al. \cite{transformer} proposed a surrogate model of self-attention mechanisms, {interpreting them as a mean-field interacting particle system on ${\mathbb{S}}^{d-1}$ for $d\ge 2$. In the special $d=2$ case, the setting of the model reduces to $\mathbb{T}$ with the interaction potential}
\begin{align} \label{transformer interaction}
    W_{\beta}(\theta) =  \frac{ e^{\beta \cos (2\pi\theta)} -1}{\beta} = \frac{I_{0}(\beta)-1} {\beta} + \frac{2}{\beta} \sum_{n=1}^{\infty} I_{n}(\beta) \cos (2\pi n\theta), 
\end{align}
where $I_{n}(\cdot)$ denotes the $n$-th modified Bessel function of the first kind and $\beta >0$ is a model parameter interpreted as inverse temperature. 
Shalova and Schlichting \cite{transformertry} further analyzed the noisy transformer dynamics associated to \eqref{transformer interaction}, along with its higher-dimensional counterparts. They computed $K_{\#}(\beta)=\beta / (2I_{1}(\beta))$ and identified bifurcations at $q_{u}$ for specific values of $K$, showing that the first bifurcation occurs at $K_{\#}(\beta)$ via a linear stability analysis. {Recently, Balasubramanian et al. \cite{rigollet25} showed that there exist $\beta_{+} > \beta_{-} >0$ such that the phase transition is continuous at $K_{c}(\beta) = K_{\#}(\beta)$ for $\beta \le \beta_{-}$ and is discontinuous at $K_{c} < K_{\#}$ for $\beta > \beta_{+}$. Moreover, they showed the estimate $\beta_{+}\le \beta_*$, where $\beta_* \approx 2.447$ is the unique solution in $\R_+$ of $I_{1}(\beta) = 2I_{2}(\beta)$.\footnote{Since $\beta \mapsto \frac{I_{2}(\beta)}{I_{1}(\beta)}$ is strictly increasing on $(0, \infty)$, the solution $\beta_{*}$ of $I_2(\beta) = \tfrac12 I_1(\beta)$ is unique.}  This value was shown to be the threshold at which the first bifurcation point $K_{\#}(\beta)$ changes from supercritical to subcritical.}

Lastly, we discuss the \emph{(noisy) Hegselmann--Krause model}, a prototypical bounded-confidence model in opinion dynamics. In the original Hegselmann--Krause model \cite{hegselmannOpinionDynamicsBounded2002}, agents update their opinions by averaging the opinions of their neighbors within a confidence radius $R\in [0, \pi]$. Beyond this discrete-time agent-based formulation, continuum and mean-field descriptions of bounded-confidence dynamics have also been studied mathematically. In particular, Wang et al. \cite{wangNoisyHegselmannKrauseSystems2017} analyzed a continuous-time noisy Hegselmann--Krause system through its mean-field Fokker--Planck equation, and numerically identified an order--disorder phase transition, although identification of $K_c$ was not accomplished. After reducing to a one-dimensional interaction on the normalized torus, we consider the free energy \eqref{eq:free-energy} with interaction
\begin{equation} \label{eq: HK interaction}
W_{R}(\theta) = \left( {R} - 2\pi |\theta| \right)_{+}^{2} = \frac{R^{3}}{3\pi} + \frac{4}{\pi} \sum_{\ell=1}^{\infty} \frac{R\ell - \sin R\ell}{\ell^{3}} \cos(2\pi\ell \theta).
\end{equation}
\cite{carrilloLongtimeBehaviourPhase2020} showed a discontinuous transition at $K_c < K_\#$ for sufficiently small $R$. Moreover, \cite{transformertry} showed a discontinuous transition at $K_c < K_\#$ on $\mathbb{S}^{d-1}, d \geq 3$, but only for a specific choice of $R$. 

\subsection{Main results}\label{ssec:intro-mr}
Motivated by the examples of the previous subsection, we study a general multimodal, repulsive-attractive interaction of the form
\begin{align} \label{eq: W-multimodal}
    W(\theta) = 2\sum_{k\in \cI} \wh W(k) \cos (2\pi k \theta) + 2\sum_{k\in \mathcal{I}_-} \wh W(k) \cos (2\pi k \theta) \eqqcolon W^{+}(\theta) - W^{-}(\theta).
\end{align}
Here, $\mathcal{I}_{\pm} \coloneqq \{k\in \mathbb{N}:\pm\wh W(k)>0\}$, $W^{\pm}$ are the positive- and negative-definite parts of $W$, and we have safely discarded the zeroth Fourier mode (i.e., assumed $W$ to be zero average) because it amounts to adding a constant to the free energy $\mathcal{F}_{K}$. The existence of a phase transition as defined above at some $K_c \ge 0$ is equivalent to $\mathcal{I}_{+}\neq \emptyset$ (see, e.g., \cite{vanderWaals}). In the $\frac{1}{n+1}$-periodic setting below, the first active attractive mode is $k=n+1$. Since replacing $W$ by $W/(2\wh W(n+1))$ simply rescales the coupling constant from $K$ to $2K\wh W(n+1)$, we may always reduce to the case $2\wh W(n+1)=1$ and will impose this normalization in our main result \cref{thm:main pt} below.

We assume that $W\in \mathcal{D}'(\T)$ is given by a Borel measurable function.\footnote{In fact, one can allow for the negative-definite part $W^{-}$ to be an arbitrary tempered distribution, provided that one interprets the convolution in the distributional sense.} Decomposing $W^{+} = W_{+}^{+} - W_{-}^{+}$ into its positive and negative parts, we assume that $W_{+}^{+}$ has the following property: for any $K\ge 0$, there exists $C_K\ge 0$ such that
\begin{align}\label{eq:W-assumption}
    H(q\mid q_u) - K \iint_{\T \times \T} W_{+}^{+}(\theta - \theta')\, dq(\theta) dq(\theta') \geq -C_K, \quad \forall q \in \mathcal{P}(\T).
\end{align}
Obviously, the condition \eqref{eq:W-assumption} is satisfied if $W_{+}^{+} \in L^\infty(\T)$. This assumption ensures that $\mathcal{F}_K$ is bounded from below for any $K\ge 0$. Initially defining $\mathcal{F}_K$ on $C^\infty(\T)\cap\mathcal{P}(\T)$, the reader may check that it has a unique continuation to a lower semicontinuous functional $\mathcal{P}(\T)\rightarrow (-\infty,\infty]$ and
\begin{align}
    \mathcal{F}_K(q) < \infty \Longleftrightarrow H(q\mid q_u) + \int_{\T^2} \left(W_{-}^{+}+W^{-}\right)(\theta - \theta') \, dq(\theta) dq(\theta') < \infty.
\end{align}
In particular, if $\mathcal{F}_K(q) < \infty$ then $q$ must be absolutely continuous with respect to the Lebesgue measure.\footnote{Here and throughout this paper, we abuse notation by using the same symbol for both a measure and its Lebesgue density (when absolutely continuous).}
Moreover, by the Dunford-Pettis theorem, a sequence $\{q_n\in \mathcal{P}(\T): \mathcal{F}_K(q_n) \leq C\}$ is weakly relatively compact with respect to the $\sigma(L^1, L^\infty)$-topology, and thus a minimizer of $\mathcal{F}_K$ always exists. We further assume that the interaction energy is upper semicontinuous.  
 This ensures that the uniqueness of the global minimizer at $K_c$ implies the convergence \eqref{eq:ptcont_def}. Thus, continuity of the transition is equivalent to the uniqueness of minimizers in the critical regime $K=K_c$. 

In principle, one could consider more singular interactions, such as the circular log gas interaction $W(\theta) = -\log|2\sin(\pi\theta)|$, but the notion of phase transition must be modified to account for the fact that the free energy is not lower-bounded for all $K\ge 0$ in this case. Indeed, for $K>1$, the free energy is not lower-bounded, and thus no minimizer exists; however, one still may have critical points (see \cref{rem:log-gas} below).


The primary contribution of this note is to determine the exact value of $K_{c}$ and the conditions under which the phase transition is (dis)continuous, as well as to give sufficient conditions for the stronger assertion of uniqueness of critical points. This is the content of the following theorem, which is our main result and whose proof is presented in \cref{sec:contipt}. In \cref{ssec:intro-applications}, we will apply this theorem to the three motivating examples of the previous subsection. The normalization assumption on $\wh{W}(n+1)$ is harmless by rescaling, as explained above. We defer comments on the other assumptions imposed in \cref{thm:main pt} to the discussion of the proof in the next subsection.

\begin{thm} \label{thm:main pt}
    For an integer $n\ge 0$, suppose that $W$ as above is $\frac{1}{n+1}$-periodic,  $2\wh W(n+1)=1$, and $\wh W$ satisfies the decay condition
    \begin{align} \label{eq:decay condition}
        2\wh W(k) \leq \frac{n+1}{k}, \quad \forall k\ge 1.
    \end{align}
    Then a continuous phase transition occurs at the critical coupling strength $K_{c} = 1 = K_{\#}.$ Furthermore, if $K\le \frac12$, then $q_u$ is the unique critical point of $\mathcal{F}_K$.
\end{thm}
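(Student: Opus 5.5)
The plan rests on a \emph{constrained Lebedev--Milin inequality} and its Legendre dual, combined with a symmetrization reduction to $\frac{1}{n+1}$-periodic densities. Set $m\coloneqq n+1$.

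\textbf{Step 1 (reduction to periodic densities).} Given $q$, let $\bar q\coloneqq\frac1m\sum_{j=0}^{m-1}q(\cdot+j/m)$. Since $W$ is $\frac1m$-periodic, the interaction energy is unchanged: it equals $\sum_{k\ge1}2\wh W(k)|\wh q(k)|^2$, only $k\in m\mathbb N$ contribute, and $\wh{\bar q}(k)=\wh q(k)$ for those $k$. By strict convexity of the entropy, $H(\bar q\mid q_u)\le H(q\mid q_u)$, with equality iff $q=\bar q$. Hence $\mathcal F_K(\bar q)\le\mathcal F_K(q)$, and any minimizer is $\frac1m$-periodic. Critical points are automatically periodic as well, since $q\propto e^{2KW*q}$.

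\textbf{Step 2 (dual inequality).} For $g$ with Fourier support in $m\Z$, the constrained Lebedev--Milin inequality should read
\[
\log\int_\T e^{g}\le \int_\T g+\sum_{k\in m\mathbb N}\frac{k}{m}|\wh g(k)|^2 .
\]
This follows by applying the classical inequality to $g(\cdot/m)$. Taking the Legendre transform mode by mode (maximize $2\mathrm{Re}(\wh g\,\overline{\wh q})-\frac km|\wh g|^2$), I expect, for $\frac1m$-periodic $q$,
\[
H(q\mid q_u)\ge\sum_{k\in m\mathbb N}\frac mk|\wh q(k)|^2 .
\]
The decay condition \eqref{eq:decay condition} then gives
\[
\mathcal F_K(q)\ge\sum_{k\in m\mathbb N}\Big(\frac mk-2K\wh W(k)\Big)|\wh q(k)|^2\ge(1-K)\sum_{k\in m\mathbb N}\frac mk|\wh q(k)|^2 .
\]
Repulsive modes only help. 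For $K<1$ this forces $\wh q(k)=0$ for all $k\ge1$, so $q_u$ is the unique minimizer.

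\textbf{Step 3 (linear threshold and supercritical regime).} The second variation at $q_u$ is $\sum_{k\ge1}(1-2K\wh W(k))|\wh\phi(k)|^2$. Since $2\wh W(k)\le m/k\le1$ with equality at $k=m$, we get $K_\#=1$. Testing $q_u+\varepsilon\cos(2\pi m\theta)$ shows nonuniform minimizers exist for $K>1$, so $K_c=1$.

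\textbf{Step 4 (continuity at $K=1$: the main obstacle).} Continuity requires uniqueness at $K=1$, where the estimate of Step 2 only gives $\mathcal F_1\ge0$. I would analyze the equality cases. Suppose $\mathcal F_1(q)=0$ with $q\neq q_u$. Then $2\wh W(k)=m/k$ on $\operatorname{supp}\wh q$, and the dual Lebedev--Milin bound must be attained. The attempted contradiction: attainment means the supremum over $g$ is achieved at $g=\log q$. The near-optimizing $g$ in the dual have $\wh g(k)=(m/k)\wh q(k)$. Equality in Lebedev--Milin holds only for $g$ of the form $-2\log|1-\bar z_0e^{2\pi i m\theta}|$ plus a constant. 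One would have to check that this is incompatible with $q=e^g/\int e^g$ unless $z_0=0$; this compatibility check is the step I have not verified. If the extremal analysis is too delicate, the fallback is a second-order refinement: a quantitative Lebedev--Milin deficit controlling $\sum|\wh q|^2$ by a higher-order term, together with the sub-quadratic behaviour near $q_u$. That $K\le1$ is borderline is consistent with the log-gas example, where $2\wh W(k)=1/k$ exactly and \eqref{eq:W-assumption} fails.

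\textbf{Step 5 (uniqueness of critical points for $K\le\frac12$).} A critical point satisfies $q=e^{g}/Z$ with $g=2KW*q$, so $\wh g=2K\wh W\wh q$ and
\[
H(q\mid q_u)=\int gq-\log Z\le \int gq = 2K\sum_{k\ge1}2\wh W(k)|\wh q(k)|^2 .
\]
Here $\log Z\ge\int g=0$ by Jensen. Combined with Step 2 and \eqref{eq:decay condition}, and writing $S\coloneqq\sum_{k\ge1}2\wh W(k)|\wh q(k)|^2$, this gives
\[
\sum_{k\in m\mathbb N}\frac mk|\wh q(k)|^2\le H(q\mid q_u)\le 2KS\le S\le\sum_{k\in m\mathbb N}\frac mk|\wh q(k)|^2 .
\]
So all inequalities are equalities. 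In particular, equality in Jensen forces $\log Z=\int g$, which is possible only if $g$ is constant. Hence $q$ is constant, i.e.\ $q=q_u$.
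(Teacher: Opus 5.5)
\textbf{Gap: Step 4.} Step 4 is the actual content of the continuity claim, namely uniqueness of the minimizer at $K=1$. The route you propose there cannot succeed, because the compatibility check you left open does not fail for $z_0\neq0$. Take $q=q_{c,n}=\frac{1-c^2}{|1-ce^{2\pi i m\theta}|^2}$ with $c\in(0,1)$. Then $\wh q(\ell m)=c^{\ell}$. Also $g=\log q=-\log|1-ce^{2\pi i m\theta}|^2+\log(1-c^2)$ has $\wh g(\ell m)=c^{\ell}/\ell=\frac{m}{\ell m}\wh q(\ell m)$.

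So $\log q$ is simultaneously the Donsker--Varadhan optimizer, the test function with $\wh g(k)=\frac mk\wh q(k)$, and a Lebedev--Milin extremal. The Poisson kernels are therefore genuine equality cases of $\Hrel{q}{q_u}\ge\sum_{k}\frac mk|\wh q(k)|^2$; this is the equality statement of \cref{prop:sharp-ineq}. For the same reason your fallback (a quantitative deficit) is unavailable: there is no deficit along this family.

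In fact no argument that uses only periodicity, the normalization $2\wh W(m)=1$ and \eqref{eq:decay condition} can give uniqueness at $K=1$. The $\frac1m$-periodic attractive log gas $W=-\log|2\sin(\pi m\theta)|$ has $2\wh W(\ell m)=1/\ell$, satisfies all of these, and has $\mathcal F_1(q_{c,n})=\mathcal F_1(q_u)$ for every $c\in[0,1)$.

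\textbf{The missing idea.} You need the standing lower-boundedness hypothesis \eqref{eq:W-assumption}.
\begin{enumerate}
\item Completing your extremal analysis shows that a second minimizer at $K=1$ must be a translate of $q_{c,n}$ with $c\in(0,1)$.
\item Such a $q$ has $\wh q(\ell m)\neq0$ for \emph{every} $\ell\ge1$.
\item Your own observation that $2\wh W(k)=m/k$ on $\operatorname{supp}\wh q$ then forces $2\wh W(\ell m)=1/\ell$ for all $\ell$. So $W$ is exactly the periodized log gas above.
\item That free energy is unbounded below for $K>1$, contradicting \eqref{eq:W-assumption}. This is \cref{rem:decay-condition}, and it is how the paper concludes.
\end{enumerate}
Hence the decay condition must be strict at some multiple of $m$. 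That kills every nontrivial extremizer and gives uniqueness of $q_u$ at $K_c=1$.

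\textbf{Remaining steps.} These are fine.
\begin{itemize}
\item Step 1's symmetrization is a valid alternative to deducing periodicity of minimizers from the Kirkwood--Monroe equation.
\item Deriving the constrained inequality by applying classical Lebedev--Milin to $g(\cdot/m)$ is correct.
\item Step 5 is the paper's argument in disguise, since the Jensen gap $\log Z-\int g$ equals $\Hrel{q_u}{q}$. You should add the one-line case $S<0$, where $0\le\Hrel{q}{q_u}\le 2KS\le0$ settles things directly.
\end{itemize}
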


\cref{thm:main pt} asserts that $\mathcal{F}_K$ has a unique global minimizer $q_u$ if and only if $K\le 1$. In particular, if $K>1$, then we have nonuniqueness of critical points of $\mathcal{F}_K$. On the other hand, if $K\le 1/2$, the theorem asserts that $q_u$ is the unique critical point of $\mathcal{F}_K$. Thus, there remains a gap $K\in (1/2,1)$ in which we have uniqueness of the global minimizer but do not know whether we also have uniqueness of critical points. This gap reflects a limitation of our method (see the discussion of the proof below), and it would be interesting to determine whether uniqueness of critical points holds for all $K\le 1$, which we tentatively conjecture to be the case.

\begin{remark}
    We emphasize that \cref{thm:main pt} makes no quantitative assumptions on the repulsive part $W^{-}$ of the interaction because $\wh{W}(k)\le 0$ for all $k\in \mathcal{I}_-$, and thus the condition \eqref{eq:decay condition} is automatically satisfied for such $k$. In particular, the repulsive part can be arbitrarily strong, and the phase transition is still continuous at $K_c = 1$. 

\end{remark}

\begin{remark}\label{rem:log-gas}
    The interaction $W(\theta) = -\log|2\sin(\pi\theta)|$ is a special case of \eqref{eq: W-multimodal} with $\cI = \mathbb{N}$ and $\wh W(k) = 1/(2k)$ for all $k \in \mathbb{N}$ (i.e., the condition \eqref{eq:decay condition} holds for $n=1$, with equality for all $k$). This is the one-dimensional (circular) attractive log gas, which was studied by Chodron de Courcel, Serfaty, and the second author in the general $d$-dimensional setting in \cite{chodrondecourcelAttractiveLogGas2025}, and whose repulsive counterpart is an important model in random matrix theory \cite{forresterLoggasesRandomMatrices2010}. As noted above, the free energy is bounded from below if and only if $K\le 1$ and thus for $K>1$, there does not exist a minimizer. However, the uniform distribution $q_u$ is still a critical point of $\mathcal{F}_K$ for all $K$. 
    This model turns out to be exactly solvable and essentially everything can be said about both its equilibrium and nonequilibrium properties.

    As first observed by C\'{e}pa and L\'{e}pingle \cite{cepaBrownianParticlesElectrostatic2001} in passing in work on the well-posedness of measure-valued solutions, the Fourier coefficients of a solution $q_t$ of \eqref{MV equation} satisfy the coupled but finitely closed system of differential equations 
    \begin{align}
       2\frac{d}{dt}\wh{q}_t(k) =  -4\pi^2 k\left(k - K\right)\wh q_t(k) + 4\pi^2k K\sum_{j=1}^{k-1}\wh{q}_t(j)\wh q_t(k-j), \qquad k\ge 1,
    \end{align}
    where the $k$-th Fourier coefficient only depends on the first $k$ Fourier coefficients.
    This remarkably simplified structure is a consequence of the special form $\wh{\partial_\theta W}(k) = - i\pi\sgn(k)$. Setting this equation equal to zero allows one to fully characterize the stationary solutions of \eqref{MV equation}, equivalently, critical points of $\mathcal{F}_K$. Namely, for $K\notin \Z_{+}$, $q_u$ is the unique critical point of $\mathcal{F}_K$, while for $K=n\in \Z_{+}$, the critical points are given by translates of
    \begin{align}
        \frac{1-c^2}{1+c^2-2c\cos(2\pi n \theta)}, \quad c\in [0,1) \qquad \text{or} \qquad \frac{1}{n}\sum_{i=0}^{n-1} \delta_{\frac{i}{n}}(\theta).
    \end{align}
    Additionally, one can establish the well-posedness of measure-valued solutions under optimal assumptions and fully characterize the dynamical stability properties of these critical points. These results are the subject of forthcoming work \cite{decourcelExactSolvabilityMacroscopic} by Chodron de Courcel, Serfaty, and the second author.
\end{remark}

\begin{remark}\label{rem:decay-condition}
    Our assumptions on $W$ that ensure that $\mathcal{F}_K$ is bounded from below for any $K\ge 0$ imply that there is some $k\ge 1$ such that the inequality in \eqref{eq:decay condition} is strict. Indeed, otherwise, $W\propto \log|2\sin(\pi\theta)|$ and thus $\mathcal{F}_K$ is not bounded from below for $K>1$, which is a contradiction.
\end{remark}

\subsection{Comments on the proof}\label{ssec:intro-proof}

The key ingredient to the proof of \cref{thm:main pt} is a sharp functional inequality relating the entropy to the $\dot{H}^{-1/2}$ seminorm (see \cref{prop:sharp-ineq} below), which allows to prove coercivity of the free energy $\mathcal{F}_K$. This inequality is a consequence---or more precisely, the dual form---of what is known in the harmonic analysis literature as the \emph{(constrained) Lebedev--Milin inequality}, presented in \cref{prop:ops-widom} below. The original\footnote{The original article is in Russian. See \cite[Chapter 5]{duren2001univalent} for a more modern presentation in English.} inequality of Lebedev and Milin \cite{lebedevMilinOneInequality1965} corresponds to the $n=0$ case and the constrained $n=1$ version was later shown by Osgood et al. \cite{osgoodExtremalsDeterminantsLaplacians1988}. Subsequently, Widom \cite{widomInequalityOsgoodPhillips1988} elegantly observed that the inequality of \cite{osgoodExtremalsDeterminantsLaplacians1988} is the first in a family of constrained inequalities imposing higher-order vanishing of the Fourier coefficients, which are a straightforward consequence of the Szeg\"o limit theorem \cite{MR890515}. We refer to \cite{changSharpInequalityExponentiation2023} and references therein for further discussion, as well as generalizations.

\begin{prop}\label{prop:ops-widom}
Let $\phi:\T\rightarrow\R$, and let $\Phi$ be its harmonic extension to the unit disk $D\subset\mathbb{R}^2$. For an integer $n\ge 0$, if $\int_{\T} e^{\phi(\theta)} e^{2\pi i k \theta} \, d\theta=0 $ for all $1 \leq k \leq n$,
then
\begin{equation}\label{eq:ops-widom}
\log\left(\int_{\T}e^{\phi(\theta)}\,d\theta\right)
- \int_{\T}\phi \,d\theta \le\ \frac{1}{4(n+1)\pi}\int_D |\nabla \Phi|^2\,dx\,dy.
\end{equation}
Moreover, equality holds if and only if $e^{-\phi}$ is a trigonometric polynomial of degree at most $n+1$. 

\end{prop}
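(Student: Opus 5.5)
The plan is to follow Widom's route through Toeplitz determinants and the strong Szeg\H{o} limit theorem. First I rewrite the right-hand side of \eqref{eq:ops-widom} in Fourier variables. Write $\hat\phi(k)=\int_\T \phi(\theta)e^{-2\pi i k\theta}\,d\theta$. Since $\Phi$ is the harmonic extension of $\phi$, we have $\int_D|\nabla\Phi|^2 = 4\pi\sum_{k\ge1}k|\hat\phi(k)|^2$. So the claim is equivalent to
\begin{equation*}
(n+1)\log \hat f(0) \le (n+1)\hat\phi(0) + \sum_{k\ge 1} k|\hat\phi(k)|^2, \qquad f\coloneqq e^{\phi}.
\end{equation*}
If the sum on the right is infinite there is nothing to prove, so I assume $\phi\in H^{1/2}(\T)$. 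For such $\phi$ we have $f\in L^1$ by Trudinger--Moser, so all Fourier coefficients of $f$ are defined.

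Next, let $D_N(f)=\det\big(\hat f(j-k)\big)_{0\le j,k\le N}$ be the Toeplitz determinants, and set $G(f)=e^{\hat\phi(0)}$ and $E(f)=\exp\big(\sum_{k\ge1}k|\hat\phi(k)|^2\right)$. I will use two classical facts.
\begin{enumerate}
\item[(i)] The ratio $D_N/D_{N-1}$ equals the prediction error $\min\{\int_\T |p|^2 f: p(z)=z^N+\dots\}$. It is therefore nonincreasing in $N$, and by Szeg\H{o}'s theorem (via Jensen's inequality for $\log|p|^2$ against the outer function) it satisfies $D_N/D_{N-1}\ge G(f)$ for every $N$.
\item[(ii)] The strong Szeg\H{o} limit theorem gives $D_N(f)/G(f)^{N+1}\to E(f)$ for $\phi\in H^{1/2}$.
\end{enumerate}
Writing
\begin{equation*}
\frac{D_N}{G^{N+1}}=\frac{D_0}{G}\prod_{m=1}^N\frac{D_m}{D_{m-1}G},
\end{equation*}
each factor is at least $1$ by (i). Hence $D_N/G^{N+1}$ is nondecreasing in $N$, and by (ii) it is bounded above by $E(f)$ for every $N$.

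Now I use the constraint. The hypothesis $\hat f(k)=0$ for $1\le k\le n$ (and hence also for $-n\le k\le -1$, since $f$ is real) makes the $(n+1)\times(n+1)$ Toeplitz matrix diagonal. Therefore $D_n(f)=\hat f(0)^{n+1}$, and the bound $D_n\le G^{n+1}E$ is exactly the displayed inequality, which proves \eqref{eq:ops-widom}.

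For the equality case, equality forces $D_N/(D_{N-1}G)=1$ for every $N\ge n+1$. In particular, the degree-$(n+1)$ prediction error already equals $G(f)$. Let $p$ be the extremal monic polynomial of degree $n+1$, so that $\int|p|^2 f = G$. Equality in the Jensen step then forces $|p|^2 f$ to be constant, and $p$ has no zeros in $D$. Thus $e^{-\phi}=c|p|^2$ is a nonnegative trigonometric polynomial of degree at most $n+1$. Conversely, if $e^{-\phi}=|p|^2$ with $p$ of degree at most $n+1$, then by Bernstein--Szeg\H{o} the ratios stabilize from index $n+1$ onward and the inequality is an equality. Here the constraint is needed to match $D_n$ with $\hat f(0)^{n+1}$.

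The main obstacle I expect is the equality characterization, specifically justifying that stabilization of the prediction errors forces $f^{-1}$ to be a trigonometric polynomial. I would try the Bernstein--Szeg\H{o}/Verblunsky argument: the vanishing of all Verblunsky coefficients beyond index $n+1$ identifies $f$ with $G/|p_{n+1}^*|^2$. A secondary technical point is invoking the strong Szeg\H{o} theorem under only the $H^{1/2}$ hypothesis. For that I would appeal to Ibragimov's/Johansson's version, or approximate $\phi$ by trigonometric polynomials while preserving the constraint up to a small perturbation, and then pass to the limit using continuity of both sides in $H^{1/2}$.
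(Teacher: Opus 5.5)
Your proposal is correct and is essentially Widom's argument, which is the route the paper invokes: the paper gives no proof of its own but cites Widom's derivation from the strong Szeg\H{o} limit theorem, namely monotonicity of $D_N/G^{N+1}$ up to the limit $E$, with the Fourier constraint making the $(n+1)\times(n+1)$ Toeplitz matrix diagonal. One cosmetic slip is that in the equality case it is the reversed polynomial $p^*(z)=z^{n+1}\overline{p(1/\bar z)}$, not $p$, that is zero-free in $D$; this is harmless, since only $|p|^2 f\equiv\mathrm{const}$ is used, and that same Jensen/outer-function equality argument already settles the ``obstacle'' you flag, so the forward direction needs no Verblunsky machinery.
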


\begin{remark}\label{rem:H12}
    The right-hand side of \eqref{eq:ops-widom} can be expressed in terms of the $\dot{H}^{\frac12}$-seminorm of $\phi$ as
    \[
    \frac{1}{4(n+1)\pi}\int_D |\nabla \Phi|^2\,dx\,dy = \frac{1}{4\pi (n+1)}\sum_{k\in\Z} |2\pi k| |\wh{\phi}(k)|^2 = \frac{1}{n+1}\sum_{k=1}^\infty |k||\wh{\phi}(k)|^2.
    \]

\end{remark}

\begin{remark}
    At the risk of being obvious, let us point out that a $\frac{1}{n+1}$-periodic function $\phi$ necessarily satisfies the vanishing conditions $\int_{\T} e^{\phi(\theta)} e^{2\pi i k \theta} \, d\theta=0 $ for all $1 \leq k \leq n$.
\end{remark}

The strategy is to use this dual form \cref{prop:sharp-ineq} to control the interaction energy term in $\mathcal{F}_{K}(q)$ by $\|q\|_{\dot{H}^{-\frac12}}^2$ to establish a coercivity estimate for the free energy. This is done term-by-term in the Parseval expansion of both the interaction energy and  $\|q\|_{\dot{H}^{-\frac12}}^2$, which allows us to conclude that
\begin{align}
    K \le \frac{n+1}{2\sup_{k\in \mathcal{I}_+} k\wh W(k)}\eqqcolon K_* \Longrightarrow \text{$q_u$ is the unique global minimizer of $\mathcal{F}_K$} \Longrightarrow K_c \ge K_*.
\end{align}
Obviously, for $K_*>0$ to hold, some decay condition on $\wh{W}(k)$ is necessary. In order to ensure that $K_c=K_*$, the idea is to show that for any $K>K_*$, $q_u$ is not a global minimizer. For this strategy to work, we need $K_{\#}\le K_*$. These conditions are responsible for the decay condition \eqref{eq:decay condition}, and the equality for the first nonzero coefficient that is implied by the normalization $2\wh W(n+1)=1$ in \cref{thm:main pt}.

The uniqueness of the critical points is an elementary a posteriori consequence of the coercivity estimate established in the preceding discussion. But the argument has an inefficiency leading to the gap between uniqueness of critical points for $K\le 1/2$ and uniqueness of the global minimizer for $K\le 1$ discussed above.



\subsection{Applications to specific models}\label{ssec:intro-applications}

Having presented our main result \cref{thm:main pt} and commented on its proof, we now apply it to the motivating models \eqref{eq:Onsager kernel}--\eqref{eq: HK interaction} introduced in \cref{ssec:intro-examples}, thereby determining the exact value of $K_c$ and the (dis)continuity of the transition explicitly in terms of the model parameters. The theorems stated below are proven in \cref{sec:applications}.

The first result shows that the Doi--Onsager model exhibits a continuous phase transition at $K_{c} = 3\pi/4$, resolving a question left open by the previous works \cite{upperKonsager, upperonsager2, carrilloLongtimeBehaviourPhase2020}.

\begin{thm}\label{Doi-Onsager thm}
    In the Doi--Onsager model \eqref{eq:Onsager kernel}, a continuous phase transition occurs at
    \begin{equation}
    K_{c} = K_{\#} = \frac{3\pi}{4}.
    \end{equation}
\end{thm}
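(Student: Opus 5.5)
We plan to deduce the statement directly from \cref{thm:main pt} with $n=1$. First, the Fourier expansion \eqref{eq:Onsager kernel} shows that $W_{\DO}$ is $\frac12$-periodic. Only frequencies $4\pi\ell\theta = 2\pi(2\ell)\theta$ appear, so in the notation \eqref{eq: W-multimodal} we have $\wh W(k)=0$ for odd $k$. For even $k=2\ell$, since $2\wh W(2\ell)\cos(2\pi\cdot 2\ell\,\theta)$ carries the full coefficient, we get
\[
2\wh W_{\DO}(2\ell)=\frac{4}{\pi}\frac{1}{4\ell^2-1}.
\]
The constant $-2/\pi$ is discarded, since it only shifts $\mathcal{F}_K$ by a constant. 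All attractive modes are positive, so $\mathcal{I}_-=\emptyset$. The potential is bounded, so $W^+_+\in L^\infty$ and \eqref{eq:W-assumption} holds. It is also continuous, so the interaction energy is upper semicontinuous, indeed weakly continuous.

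Next we normalize. The first active mode is $k=n+1=2$, where $2\wh W_{\DO}(2)=\frac{4}{3\pi}$. We set $\widetilde W \coloneqq \frac{3\pi}{4}W_{\DO}$, so that $2\wh{\widetilde W}(2)=1$. Then $\mathcal{F}_K$ with $W_{\DO}$ equals $\mathcal{F}_{\widetilde K}$ with $\widetilde W$, where $\widetilde K = \frac{4}{3\pi}K$.

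We then check the decay condition \eqref{eq:decay condition} with $n=1$. It is trivial for odd $k$. For $k=2\ell$ it reads
\[
\frac{3\pi}{4}\cdot\frac{4}{\pi}\cdot\frac{1}{4\ell^2-1}=\frac{3}{4\ell^2-1}\le \frac{2}{2\ell}=\frac1\ell .
\]
This is equivalent to $4\ell^2-3\ell-1\ge 0$, i.e.\ $(4\ell+1)(\ell-1)\ge0$, which holds for all $\ell\ge1$, with equality only at $\ell=1$ as required. The only mildly delicate point is this bookkeeping of the factor-of-two conventions between \eqref{eq:Onsager kernel} and \eqref{eq: W-multimodal}, and we verify it carefully.

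\cref{thm:main pt} then gives a continuous phase transition at $\widetilde K_c=\widetilde K_\#=1$. Undoing the rescaling gives $K_c=K_\#=\frac{3\pi}{4}$. As a consistency check on $K_\#$, the second variation of $\mathcal{F}_K$ at $q_u$ in mode $k$ is $|\wh\psi(k)|^2\bigl(1-2K\wh W(k)\bigr)$ up to normalization. Its positivity is first lost at the largest coefficient, namely $k=2$ with $2\wh W(2)=\frac{4}{3\pi}$, which gives $K_\#=\frac{3\pi}{4}$. This agrees with the value obtained from the theorem.
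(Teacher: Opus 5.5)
Your proposal is correct and follows the paper's proof: you check $\frac12$-periodicity, compute $2\wh W_{\DO}(2\ell)=\frac{4}{\pi(4\ell^2-1)}$, normalize by $2\wh W_{\DO}(2)=\frac{4}{3\pi}$, reduce the decay condition with $n=1$ to $(4\ell+1)(\ell-1)\ge 0$, and invoke \cref{thm:main pt}. You also explicitly verify the standing assumptions (boundedness and continuity of $W_{\DO}$, $\mathcal{I}_-=\emptyset$) and the rescaling $\widetilde K=\frac{4}{3\pi}K$, which the paper leaves implicit.
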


The second result for the noisy transformer model resolves the gap left open in the previous work of Balasubramanian et al. \cite{rigollet25} by showing that the previously introduced thresholds satisfy $\beta_-=\beta_+=\beta_*$.

\begin{thm} \label{transformer thm}
    For the noisy transformer model \eqref{transformer interaction}, let $\beta_{*}>0$ be the unique solution of $I_2(\beta) = \tfrac12 I_1(\beta)$. Then
    \begin{enumerate}[(1)]
        \item For $\beta\le\beta_\ast$, the phase transition is continuous and occurs at $K_c(\beta)=K_\#(\beta)$.
        \item For $\beta>\beta_\ast$, the phase transition is discontinuous and occurs strictly below the linear stability threshold, i.e.\ $K_c(\beta)<K_\#(\beta)$.
    \end{enumerate}
\end{thm}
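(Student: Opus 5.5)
For part (1) I will apply \cref{thm:main pt} with $n=0$. The normalized interaction is $\widetilde W = W_\beta/(2\wh W_\beta(1))$. Its Fourier coefficients are $2\wh{\widetilde W}(k) = I_k(\beta)/I_1(\beta)$. The coupling rescales as $\widetilde K = 2K\wh W_\beta(1) = 2K I_1(\beta)/\beta$, so $\widetilde K=1$ corresponds to $K=\beta/(2I_1(\beta)) = K_\#(\beta)$. The assumptions on $W$, namely \eqref{eq:W-assumption} and upper semicontinuity, are immediate since $W_\beta$ is smooth and bounded.

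It therefore remains to check the decay condition \eqref{eq:decay condition}, i.e.
\begin{align*}
k\, I_k(\beta) \le I_1(\beta) \quad \text{for all } k\ge 1,\ \beta\le\beta_*.
\end{align*}
The case $k=1$ is trivial, and the case $k=2$ is exactly $2I_2(\beta)\le I_1(\beta)$, which holds for $\beta\le\beta_*$ because $I_2/I_1$ is increasing. For $k\ge 3$ I will induct using the ratio $r_k(\beta) = I_{k+1}(\beta)/I_k(\beta)$. The inductive step needs $(k+1) r_k \le k$. A known bound for Bessel ratios, $r_k(\beta) \le \beta/(k+1+\sqrt{(k+1)^2+\beta^2})$ or simply $r_k< \beta/(2(k+1))\cdot$const, combined with the monotonicity of $r_k$ in $k$ (namely $r_{k+1}\le r_k$), gives the chain
\begin{align*}
(k+1)I_{k+1} = (k+1)\, r_k\, I_k .
\end{align*}
The cleanest way I see is to show directly that $k I_k$ is nonincreasing in $k$ for $k\ge2$ when $\beta\le\beta_*\approx2.447$. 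By Amos-type bounds, $r_k \le \beta/(k+1+\sqrt{k^2+\beta^2})$, so $(k+1)r_k\le k$ holds as soon as $\beta$ is at most roughly $2\sqrt{k(k+1)}$-ish. That is easily true for $k\ge 2$ and $\beta\le 2.45$; I will check the constants explicitly. So $kI_k\le 2I_2\le I_1$ for all $k\ge 2$.

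For part (2), take $\beta>\beta_*$, so $2I_2(\beta)>I_1(\beta)$. I will show that $q_u$ fails to be a global minimizer at $K=K_\#$, and more precisely at some $K<K_\#$. Then $K_c<K_\#$, and the transition is discontinuous: if it were continuous, weak-nonlinear bifurcation from $q_u$ would force $K_c=K_\#$. The test direction is
\begin{align*}
q_\varepsilon = 1+\varepsilon\cos(2\pi\theta)+\varepsilon^2 a\cos(4\pi\theta).
\end{align*}
I will expand $\mathcal{F}_K(q_\varepsilon)-\mathcal{F}_K(q_u)$ to fourth order at $K=K_\#$, where the quadratic term in the first mode vanishes. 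I will then optimize over $a$ and compute the quartic coefficient, which is a negative multiple of $(2I_2-I_1)$ in appropriate units. This is the classical subcritical pitchfork computation, as in \cite{rigollet25}, which identified $\beta_*$ as the threshold where the bifurcation becomes subcritical. The result is $\mathcal{F}_{K_\#}(q_\varepsilon)<\mathcal{F}_{K_\#}(q_u)$ for small $\varepsilon$. By continuity in $K$, the same strict inequality persists for some $K<K_\#$, hence $K_c<K_\#$.

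For discontinuity, I will use the bifurcation structure from \cite{transformertry,rigollet25}. Near $(q_u,K_\#)$, nonuniform critical points exist only for $K<K_\#$ and have higher energy. Hence the minimizers at $K_c<K_\#$ are bounded away from $q_u$, and a nonuniform minimizer coexists with $q_u$ at $K_c$. I expect the main obstacles to be the uniform-in-$k$ Bessel inequality in part (1), and the bookkeeping of the fourth-order expansion, including the sign, in part (2).
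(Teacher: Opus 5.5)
Your proposal is correct and follows the paper's proof. For $\beta\le\beta_*$ you apply \cref{thm:main pt} with $n=0$ after checking $kI_k\le I_1$; for $\beta>\beta_*$ you use the same two-mode perturbation at $K_\#$ as \cref{lemma:bimodal}, and your discontinuity argument is in effect the criterion \cref{phase trans continuity prop}\eqref{disconti criteria}, which the paper simply cites.

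Two small fixes:
\begin{itemize}
\item The expression $\beta/(k+1+\sqrt{(k+1)^2+\beta^2})$ is Amos's \emph{lower} bound for $I_{k+1}/I_k$, not an upper bound. Rely instead on the term-by-term series bound $I_{k+1}(\beta)\le\frac{\beta}{2(k+1)}I_k(\beta)$. It gives $(k+1)I_{k+1}\le\frac{\beta}{2}I_k\le kI_k$ for $k\ge2$ and $\beta\le\beta_*<4$, which is exactly the paper's induction.
\item For discontinuity, the operative fact is that $q_u$ is nondegenerate at every $K<K_\#$, which is what the cited criterion encodes. The local pitchfork picture near $K_\#$ does not by itself say anything at $K_c$.
\end{itemize}
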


Our final result improves upon the previous work \cite{carrilloLongtimeBehaviourPhase2020} for the Hegselmann--Krause model from the small-$R$ regime to the full range of $R \in [0, \pi]$.

\begin{thm} \label{HK thm}
    For the Hegselmann--Krause model \eqref{eq: HK interaction}, let $R_{*} \in (0, \pi)$ be the unique solution of $R = (\sin R)(2- \cos R)$. Then
    \begin{enumerate}[(1)]
        \item For $R < R_{\ast}$, the phase transition is discontinuous at $K_{c}(R) < K_\#(R)$.
        \item For $R \geq R_\ast$, the phase transition is continuous and occurs at $K_c(R)=K_\#(R)$.
    \end{enumerate}
\end{thm}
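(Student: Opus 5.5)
The plan is to reduce everything to the Fourier coefficients of $W_R$ and to the behaviour of the ratio $\wh W_R(2)/\wh W_R(1)$. By \eqref{eq: HK interaction}, $2\wh W_R(\ell)=\frac{4}{\pi}\frac{R\ell-\sin R\ell}{\ell^3}>0$ for every $\ell\ge1$, since $x>\sin x$ for $x>0$. Hence $\cI=\mathbb{N}$, the relevant case of \cref{thm:main pt} is $n=0$, and the first attractive mode is $k=1$. After the rescaling $W\mapsto W/(2\wh W_R(1))$, the linear threshold becomes $K_\#(R)=1/(2\wh W_R(1))=\frac{\pi}{4(R-\sin R)}$. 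The decay condition \eqref{eq:decay condition} then reads
\[
\frac{Rk-\sin Rk}{k^2}\le R-\sin R,\qquad \forall k\ge 1 .
\]

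\textbf{Step 1: the threshold $R_*$.} For $k=2$ the decay condition is $2R-\sin 2R\le 4(R-\sin R)$. This is equivalent to $h(R):=R-2\sin R+\tfrac12\sin 2R\ge 0$, i.e.\ $R\ge \sin R\,(2-\cos R)$. We have $h'(R)=1-2\cos R+\cos 2R=2\cos R(\cos R-1)$. So $h$ decreases on $(0,\pi/2)$ and increases on $(\pi/2,\pi)$, with $h(0)=0$ and $h(\pi)=\pi>0$. Thus $h$ has a unique zero $R_*\in(\pi/2,\pi)$, and $h\ge0$ exactly when $R\ge R_*$. Numerically $R_*\approx 2.15$, since $h(2.1)<0<h(2.2)$.

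\textbf{Step 2: continuity for $R\ge R_*$.} We must check the remaining indices $k\ge3$; the case $k=1$ is trivial. We use the crude bound $\frac{Rk-\sin Rk}{k^2}\le \frac{R}{k}+\frac1{k^2}\le \frac R3+\frac19$. Since $R\ge R_*>2$, we have $\sin R\le \sin 2<0.91$, so $R-\sin R\ge R-0.91$. The inequality $R-0.91\ge R/3+1/9$ holds as soon as $R\ge 1.54$, which covers our range. With this, \cref{thm:main pt} applies to the rescaled potential. Since $W_R$ is bounded and continuous, the assumption \eqref{eq:W-assumption} and the upper semicontinuity of the interaction energy hold trivially. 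The theorem then yields a continuous transition at $K_c(R)=K_\#(R)$.

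\textbf{Step 3: discontinuity for $R<R_*$.} This is the main obstacle, because \cref{thm:main pt} says nothing here and we must exhibit a competitor that beats $q_u$ at $K=K_\#$. I would use a fourth-order expansion along the subcritical bifurcation direction. Take
\[
q_\varepsilon=1+2\varepsilon\cos(2\pi\theta)+2a\varepsilon^2\cos(4\pi\theta),
\]
with $a$ free. Expand the entropy as $\int(1+g)\log(1+g)=\tfrac12\int g^2-\tfrac16\int g^3+\tfrac1{12}\int g^4+O(\varepsilon^5)$. The interaction term is exactly $-K\sum_{k}2\wh W(k)|\wh q(k)|^2$.

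At $K=K_\#$ the $\varepsilon^2$ terms cancel, and there is no $\varepsilon^3$ term by parity. The $\varepsilon^4$ coefficient is a quadratic polynomial in $a$, built from $\int g^3$, $\int g^4$ and the mode-$2$ contribution $2a^2(1-K_\#2\wh W(2))$. Minimizing over $a$, I expect this coefficient to be negative precisely when $\wh W(2)>\tfrac12\wh W(1)$. This is the same criterion as $I_2>\tfrac12 I_1$ in the transformer case, and by Step 1 it is equivalent to $R<R_*$. Hence $\mathcal F_{K_\#}(q_\varepsilon)<\mathcal F_{K_\#}(q_u)$ for small $\varepsilon$, and so $K_c<K_\#$.

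Finally, $K_c<K_\#$ forces discontinuity, by the following argument. Take minimizers $q_K\ne q_u$ for $K\downarrow K_c$. By compactness and lower semicontinuity they have a limit point $q^\ast$ that is a minimizer at $K_c$. If $q^\ast=q_u$, then $q_K\to q_u$. But at $K<K_\#$ the second variation of $\mathcal F_K$ at $q_u$ is strictly positive, so $q_u$ is a strict local minimizer uniformly for $K$ near $K_c$, and this convergence is impossible. Hence $q^\ast\ne q_u$, so the minimizer at $K_c$ is nonunique and the transition is discontinuous. Making this last local-minimality argument rigorous in $L^1$ requires a local coercivity estimate near $q_u$, and I would take it from the general discussion in \cref{sec:contipt}.
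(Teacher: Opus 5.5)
Your Step 1 is fine. Step 2 is also correct, and it is shorter than the paper's induction in \cref{lemma: HK decay}: the bound $\frac{Rk-\sin Rk}{k^2}\le \frac R3+\frac19$, together with $R-\sin R\ge R-\sin 2$ on $[2,\pi]$, handles all $k\ge 3$ at once.

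\textbf{The gap: $K_\#(R)=1/(2\wh W_R(1))$ is assumed, not proved.} You justify this identification because $k=1$ is ``the first attractive mode.'' But \eqref{eq:Ksharp} defines $K_\#$ through $\max_{k\in\cI}\wh W(k)$, so you need $\wh W_R(\ell)\le \wh W_R(1)$ for every $\ell\ge1$.
\begin{itemize}
\item For $R\ge R_*$ this follows from your Step 2.
\item For $R<R_*$ your proposal gives nothing, and part (1) depends on it. Your competitor $q_\varepsilon$ only shows that $q_u$ is not a global minimizer at $K=1/(2\wh W_R(1))$. The $\varepsilon^2$ term cancels there no matter what the other modes are. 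This yields $K_c<K_\#$, and discontinuity via \cref{phase trans continuity prop}\eqref{disconti criteria}, only if that value actually equals $K_\#$.
\end{itemize}
This is not a formality. One has $\wh W_R(\ell)/\wh W_R(1)=1-\frac{(\ell^2-1)R^2}{20}+O(R^4)$, so for small $R$ all low modes are nearly tied. The paper fills this in with \cref{lemma: HK monotone}: it writes $g(x)=x-\sin x=2\int_0^x\sin^2(t/2)\,dt$ and uses $|\sin(\ell u)|\le \ell|\sin u|$ to get $g(\ell R)\le \ell^3 g(R)$.

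\textbf{Step 3: the quartic computation.} You leave it as ``I expect,'' and the ingredient you list is off by a factor of $2$. With $\wh q_\varepsilon(2)=a\varepsilon^2$, the mode-$2$ contribution is $a^2(1-K_\#\,2\wh W(2))\varepsilon^4$, not $2a^2(\cdots)$. Set $w:=\wh W(2)/\wh W(1)$. Then the $\varepsilon^4$ coefficient is $(1-w)a^2-a+\tfrac12$. Its minimum over $a$ is $\tfrac12-\tfrac{1}{4(1-w)}$, which is negative exactly when $w>\tfrac12$. With your factor of $2$ you would instead get the wrong threshold $w>\tfrac34$. This computation is \cref{lemma:bimodal}. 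You can simply cite it once mode-$1$ dominance is established, since that dominance is its hypothesis $2\wh W(k)\le 1$.

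\textbf{Final paragraph: discontinuity.} \cref{sec:contipt} contains no local coercivity estimate for you to import. The implication from ``$q_u$ is not a global minimizer at $K=K_\#$'' to ``$K_c<K_\#$ and the transition is discontinuous'' is exactly \cref{phase trans continuity prop}\eqref{disconti criteria}, and citing it is the intended route. Your sketch could be made rigorous another way: use the Kirkwood--Monroe equation to upgrade convergence of minimizers to uniform convergence, then Taylor expand the entropy around $q_u$.
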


\subsection{Implications for the gradient flow}\label{ssec:intro-dynamics}

Since the McKean--Vlasov equation \eqref{MV equation} is the $2$-Wasserstein gradient flow of
$\mathcal F_K$, our static results for the free energy have direct consequences for
the long-time behavior of the dynamics. For the purposes of this discussion, we will continue to use the normalization of \cref{thm:main pt}, except when specializing to our motivating models. We further assume that $\partial_\theta W\in L^p$ for some $p>1$, which ensures global well-posedness of solutions for initial data in $\mathcal{P}(\T)$ as well as instantaneous $C^\infty$ smoothing, thereby avoiding any regularity issues. 

By standard compactness arguments, it is easy to show that any solution $q_t$ of \eqref{MV equation} converges weakly to a critical point $q_\infty$ of $\mathcal{F}_K$ as $t\rightarrow \infty$. The key question is what can be said about the selected equilibrium $q_\infty$ (e.g., is it the global minimizer?) and the rate of convergence. Uniqueness of critical points obviously answers the former question, which is guaranteed for $K\le 1/2$ by \cref{thm:main pt}. In the absence of uniqueness, the answer is more subtle and a priori depends on the initial data. 

Under the additional assumption that there exists a $C_W>0$ such that
\begin{align}
    |\partial_\theta^k W(\theta)| \le \frac{C_W^{k+1}k!}{|\theta|^{k-1}}, \quad \forall k\in \mathbb{N}, \ \theta \in \left[-\frac12, \frac12\right],
\end{align}
which is satisfied by the interactions in the motivating examples , the recent work of Choi et al. \cite{choiWassersteinLojasiewiczInequalitiesAsymptotics2025} implies that any critical point $q_\infty$ satisfies a \L{}ojasiewicz inequality and therefore the rate of decay $W_2(q_t,q_\infty)$ is at least algebraic, possibly exponential. Of course, this result does not tell us which critical point is selected by the flow or give the exact rate of convergence, but it does give some indication of the local geometry of the free-energy landscape around the selected equilibrium $q_\infty$. Nevertheless, in certain (local) regimes, one can say more about the selected equilibrium and the rate of convergence, as we now discuss.

In the subcritical regime $K<K_c$, for initial data $q_0$ sufficiently close to $q_u$, the solution $q_t$ converges to $q_u$ as $t\rightarrow \infty$ at an exponential rate governed by the spectral gap of the linearization around $q_u$,
\begin{align}
    \lambda_*(K):=\min_{k\ge1}\frac{k^2}{2}(1-K2\wh{W}(k))>0.
\end{align}
Consequently, one can show that
\begin{align}\label{eq:subcritical rate}
    W_2(q_t,q_u)\asymp e^{-\lambda_*(K)t} \quad \text{as } t\rightarrow \infty.
\end{align}
Under the additional assumption of uniqueness of critical points, one can say that given any initial data $q_0$, there is some time $T\ge 0$ such that for all $t\ge T$, the asymptotic \eqref{eq:subcritical rate} holds. 


If $K_c=K_{\#}$, then the spectral gap closes at criticality $K=K_c$, and the relaxation is expected to be algebraic, not exponential. Indeed, suppose that $\wh{W}$ has a unique dominant mode $k_* \ge 1$ in the sense that $\wh W(k_*) > \wh W(k)$ for all $k \ne k_*$.  Introducing the ratios $r_j:=\frac{\wh W(jk_*)}{\wh W(k_*)}$, preliminary calculations based on Landau-type free-energy expansions indicate the following predictions for initial data $q_0$ sufficiently close to $q_u$:
\begin{enumerate}
    \item (Quartic case) If $r_2<\frac12$, then $W_2(q_t,q_u)\asymp t^{-1/2}$ as $t\rightarrow\infty$.
    \item (Sextic case) If $r_2=\frac12$ and $r_3<\frac13$, then $W_2(q_t,q_u)\asymp t^{-1/4}$ as $t\rightarrow\infty$.
\end{enumerate}
 Similar to before, if one knows that $q_u$ is the unique critical point, then one can say that given any initial data $q_0$, there is some time $T\ge 0$ such that for all $t\ge T$, the above decay rates hold. Note that under the assumptions of \cref{thm:main pt}, the dominant mode is $k_*=n+1$ and the condition $r_2\le \frac12$  is satisfied. Proving these predicted asymptotics is the subject of ongoing work by the authors.

Based on these ans\"atz, we expect the following convergence rates for the models \eqref{eq:Onsager kernel}--\eqref{eq: HK interaction} at criticality $K=K_c$ for initial data sufficiently close to $q_u$:
\begin{enumerate}
    \item For the Doi--Onsager model, 
    \begin{align}
        W_2(q_t,q_u)\asymp t^{-1/2}.
    \end{align}
    \item For the noisy transformer model,
    \begin{align} \label{eq: transformer rate}
        W_2(q_t,q_u)\asymp \begin{cases}
            t^{-1/2}, & \quad \beta < \beta_* \\
            t^{-1/4}, & \quad \beta = \beta_*
        \end{cases}.
    \end{align}
    \item For the Hegselmann--Krause model,
    \begin{align} \label{eq: HK rate}
        W_2(q_t,q_u)\asymp \begin{cases}
            t^{-1/2}, & \quad R > R_* \\
            t^{-1/4}, & \quad R = R_*
        \end{cases}.
    \end{align}
\end{enumerate}
As in \eqref{eq: transformer rate} and \eqref{eq: HK rate}, the transition in the rate happens at the tricritical points $\beta=\beta_*$ and $R=R_*$, respectively, where $r_2 = \frac12$ and $r_k < \frac{1}{k}$ for $k \geq 3$.

The supercritical regime $K>K_c$ is the most subtle, as one is guaranteed to have at least two critical points, $q_u$ and a nonuniform global minimizer, and their respective (un)stable manifolds can in principle be quite complicated depending on the structure of $W$. At least in the case where $W$ has a sufficiently dominant single mode, the problem may be viewed as a perturbation of the unimodal case and one can prove a spectral gap for the linearization around a global minimizer. This then allows one to show local exponential convergence. We refer to our recent work \cite[Section 1]{mun_phase_2026} for more discussion on this point. Going beyond the perturbative regime is a difficult problem and the subject of ongoing research. 

\section{\texorpdfstring{Continuous phase transitions in multimodal models}{Continuous phase transitions in multimodal models}} \label{sec:contipt}
This section is devoted to the proof of \cref{thm:main pt}. We have already discussed the lower boundedness of the free energy and existence of minimizers in the introduction. We recall some preliminary facts about critical points and stability. 


Given $q\in\mathcal{P}(\T)$, consider a perturbation of the form $q+ \varepsilon \varphi$, where $\int_\T \varphi = 0$ so that $q + \varepsilon \varphi \in \mathcal{P}(\T)$ for sufficiently small $\varepsilon>0$. By Taylor's theorem,
\begin{equation}
    \mathcal{F}_{K}(q + \varepsilon \varphi) = \mathcal{F}_{K}(q) + \varepsilon \frac{\delta \mathcal{F}_K}{\delta q}(q) [ \varphi] + \frac{\varepsilon^{2}}{2} \frac{\delta^{2} \mathcal{F}_K}{\delta q^2}(q) [ \varphi, \varphi] + o(\varepsilon^2), \quad \varepsilon \to 0.
\end{equation}
The first and second variations are
\begin{align}
    &\frac{\delta \mathcal{F}_K}{\delta q}(q)[\varphi] = \int_\T \left(\log q +1 -2K (W \ast q) \right) \varphi d\theta, \\
    &\frac{\delta^{2} \mathcal{F}_K}{\delta q^2}(q)[\varphi, \varphi] = \int_\T \frac{\varphi^{2}}{q} d\theta - 2K \int_\T \varphi (W \ast \varphi) d\theta.  
\end{align} 
Evaluating at $q = q_u$, we obtain 
\begin{align}
    &\frac{\delta \mathcal{F}_K}{\delta q}(q_u)[\varphi] = \int_\T \varphi d\theta = 0,  \\
    &\frac{\delta^{2} \mathcal{F}_K}{\delta q^2}(q_u)[\varphi, \varphi] = 2 \sum_{k=1}^{\infty} \left(1- 2K\wh W(k)\right) |\wh\varphi(k)|^{2},
\end{align}
where the first identity follows from $\int_\T \varphi = 0$, and the second follows from Parseval's identity. Consequently, we observe the following:
\begin{enumerate}[1.]
\item $q_u$ is a critical point of $\mathcal{F}_K$ for all $K \in \mathbb{R}$,
\item $q_u$ is a local minimizer of $\mathcal{F}_{K}$ for $K \in [0, K_\#)$, where the linear stability threshold for $q_u$ is
    \begin{equation} \label{eq:Ksharp}
        K_\# \coloneqq \frac{1}{2\max_{k \in \cI} \wh W(k)}.
    \end{equation}
\item If $K > K_\#$, then there exists some $q \in \mathcal{P}(\T)$ such that
    \begin{equation} \label{eq: q_u loses stability}
        \mathcal{F}_K (q) < \mathcal{F}_K(q_u).
    \end{equation}
\end{enumerate}
Note that for $W$ satisfying the assumptions of \cref{thm:main pt}, we have $2\max_{k\in \cI} \wh W(k) =2\wh W(n+1)=1$ and $K_\# = 1$. 

Previous work \cite{chayesMcKeanVlasovEquationFinite2010,carrilloLongtimeBehaviourPhase2020} studied the continuity of phase transitions for general periodic interactions, relating the critical interaction strength $K_c$ and the (dis)continuity of the phase transition to the linear stability threshold $K_{\#}$ for $q_u$. We recall the following criterion from \cite[Proposition 2.12]{chayesMcKeanVlasovEquationFinite2010},\footnote{Strictly speaking, \cite{chayesMcKeanVlasovEquationFinite2010} assumed that the interaction $W\in L^\infty$ throughout their work, but this can be relaxed to our more general setting.} which will be used later below.

\begin{prop} \label{phase trans continuity prop}
Suppose $W$ is as above, and let $K_{c}, \ K_{\#}$ respectively be the phase transition threshold and linear stability threshold for $q_u$. Then the following hold:
\begin{enumerate}
    \item If $q_u$ is the unique global minimizer of $\mathcal{F}$ at $K=K_{\#}$, then $K_{c} = K_{\#}$ and the phase transition is continuous.
    \item \label{disconti criteria} If $q_{u}$ is not a global minimizer of $\mathcal{F}$ at $K=K_{\#}$, then $K_{c} < K_{\#}$ and the phase transition is discontinuous.
\end{enumerate}
\end{prop}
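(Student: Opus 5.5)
The plan is to work with the normalized functional
\[
G_K(q):=\mathcal{F}_K(q)-\mathcal{F}_K(q_u)=H(q\mid q_u)-K\,\mathcal{E}(q),\qquad \mathcal{E}(q):=\iint_{\T\times\T}W(\theta-\theta')\,dq(\theta)\,dq(\theta'),
\]
which uses that $W$ has zero average, so $\mathcal{E}(q_u)=0$. Then $q_u$ is a global minimizer at $K$ if and only if $H(q\mid q_u)\ge K\mathcal{E}(q)$ for all $q$.

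\textbf{Step 1: the set where $q_u$ is a global minimizer is an interval $[0,K_c]$.} Suppose $q_u$ is a global minimizer at some $K_0$, let $K<K_0$, and take $q\neq q_u$.
\begin{itemize}
\item If $\mathcal{E}(q)>0$, then $H(q\mid q_u)\ge K_0\mathcal{E}(q)>K\mathcal{E}(q)$.
\item If $\mathcal{E}(q)\le 0$, then $G_K(q)\ge H(q\mid q_u)>0$.
\end{itemize}
Hence $q_u$ is the \emph{unique} minimizer for every $K<K_0$. Next, $K\mapsto \inf_q G_K(q)$ is an infimum of affine functions, so it is concave and finite (finiteness uses \eqref{eq:W-assumption}), hence continuous. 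Therefore $\{K:\inf_q G_K(q)\ge 0\}$ is closed, and it equals $[0,K_c]$. Observation 3 preceding \eqref{eq:Ksharp} gives $K_c\le K_\#$.

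\textbf{Step 2: compactness of minimizers as $K\downarrow K_c$.} Let $q_K$ be global minimizers for $K>K_c$. The free energies $\mathcal{F}_K(q_K)\le\mathcal{F}_K(q_u)$ are uniformly bounded, so Dunford--Pettis gives weak $L^1$ subsequential limits $q_*$. By lower semicontinuity of the entropy and the assumed upper semicontinuity of the interaction energy, $q_*$ is a minimizer at $K_c$. If $q_u$ is the unique minimizer at $K_c$, then $q_*=q_u$. Moreover $G_K(q_K)\le 0$ gives $H(q_K\mid q_u)\le K\mathcal{E}(q_K)$, and upper semicontinuity gives $\limsup\mathcal{E}(q_K)\le \mathcal{E}(q_u)=0$. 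Thus $H(q_K\mid q_u)\to 0$, and Pinsker's inequality yields \eqref{eq:ptcont_def}.

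\textbf{Step 3: part (1).} If $q_u$ is the unique minimizer at $K_\#$, then $K_\#\le K_c$ by Step 1. Combined with $K_c\le K_\#$, this gives $K_c=K_\#$. Step 2 then gives continuity of the transition.

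\textbf{Step 4: part (2).} If $q_u$ is not a global minimizer at $K_\#$, closedness in Step 1 forces $K_c<K_\#$. For discontinuity, argue by contradiction.
\begin{itemize}
\item Suppose $q_u$ were the unique minimizer at $K_c$. Step 2 produces nonuniform minimizers $q_n$ at $K_n\downarrow K_c$ with $H(q_n\mid q_u)\to 0$.
\item Each $q_n$ solves the Kirkwood--Monroe equation $q_n=e^{2K_nW*q_n}/Z_n$. Write $\varphi_n=q_n-1\neq 0$ and expand the exponential:
\[
\varphi_n-2K_n\,W*\varphi_n=R_n,
\]
where $R_n$ is quadratic in $W*\varphi_n$ (after subtracting its mean).
\item Since $K_c<K_\#$, the Fourier multiplier $1-2K\wh W(k)$ is bounded below by a positive constant uniformly for $K$ near $K_c$ and all $k\ge 1$. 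So $I-2K_nW*$ is invertible on mean-zero functions, with uniformly bounded inverse.
\item This gives $\|\varphi_n\|\lesssim\|R_n\|\lesssim\|\varphi_n\|^2$ in a suitable norm. Hence $\varphi_n=0$ for large $n$, a contradiction.
\end{itemize}

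\textbf{Main obstacle.} The hard step is this last estimate: upgrading entropy smallness to smallness in a norm where the nonlinearity is genuinely quadratic, given that $W$ is only Borel with $W^-$ possibly singular. I would try the bootstrap in the norm $\|W^+*\varphi\|_{L^\infty}$. Convolving $\varphi_n\to 0$ in $L^1$ against the bounded part of $W^+$, and using the exponential structure of the Kirkwood--Monroe equation to absorb the repulsive part $W^-$ (which only helps, being negative definite), should give smallness of $W*\varphi_n$ in $L^\infty$. That smallness makes the Taylor remainder of the exponential controlled.

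As a fallback, I would instead use the second-variation coercivity at $q_u$ for $K<K_\#$, i.e.\ $G_K(q)\ge c\,\|q-1\|_{L^2}^2$ for $q$ near $q_u$. Applied to $q_n$ at $K_n$ close to $K_c$, this contradicts $G_{K_n}(q_n)\le 0$.
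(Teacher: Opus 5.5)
Your Steps 1--3, and the inequality $K_c<K_\#$ in part (2), are correct. The gap is the \emph{discontinuity} claim in part (2), which is the only nontrivial content there, and neither of your mechanisms establishes it.

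In the linearization, making $R_n$ genuinely quadratic requires $\|W*\varphi_n\|_{L^\infty}\to0$, but you only know $\Hrel{q_n}{q_u}\to0$. The remark that $W^-$ ``only helps, being negative definite'' does not supply this. Negative definiteness is the quadratic-form statement $\iint W^-\varphi\varphi\ge0$. It is not a pointwise or sup bound on $W^-*\varphi_n$ in the exponent of the Kirkwood--Monroe equation. Since the paper allows $W^-$ and $W^+_+$ to be unbounded, that equation gives no $L^\infty$ control for free.

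Your fallback $G_K(q)\ge c\|q-1\|_{L^2}^2$ is false on entropy or $L^1$ neighbourhoods of $q_u$; it needs $(q-1)_+$ uniformly small. For example, take $q=1-m+m^{-1}\indic_{[0,m^2]}$. Then $\Hrel{q}{q_u}\approx m\log(1/m)\to0$, while $\|q-1\|_{L^2}^2\approx1$.

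Both of your routes close trivially only when $W\in L^\infty$, which is the setting of Chayes--Panferov, from which the paper imports this proposition without proof. In that case $\|W*\varphi_n\|_{L^\infty}\le\|W\|_{L^\infty}\|\varphi_n\|_{L^1}\to0$, so $q_n=e^{2K_nW*\varphi_n}/Z_n\to1$ uniformly.

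The missing idea is that you need neither a quadratic lower bound nor the Euler--Lagrange equation. You only need positivity of $G_K$ on an $L^1$-neighbourhood of $q_u$, which follows by splitting off the large values of $q$. Let $\psi(x)=(1+x)\log(1+x)-x$ and write $\varphi=q-1=a+b$ with $b=\varphi\indic_{\{\varphi>M\}}\ge0$. Use $\psi(x)\ge x^2/(2(1+M))$ for $-1\le x\le M$, drop $W^-$, use $\wh W(k)\le 1/(2K_\#)$ on $\cI$ and positive semidefiniteness of $W^+$, and use $b\ge0$ with $W^+\le W^+_+$. This gives
\[
\Hrel{q}{q_u}\ge\frac{\|a\|_{L^2}^2}{2(1+M)}+\int_{\{\varphi>M\}}\psi(\varphi),\qquad
\mathcal{E}(q)\le(1+\eta)\frac{\|a\|_{L^2}^2}{2K_\#}+(1+\eta^{-1})\iint_{\T\times\T}W^+_+(\theta-\theta')\,b(\theta)\,b(\theta')\,d\theta\,d\theta'.
\]
The last integral is $O(\|b\|_{L^1}^2)$ when $W^+_+$ is bounded. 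In general it is $o\bigl(\int_{\{\varphi>M\}}\psi(\varphi)\bigr)$ as $\|b\|_{L^1}\to0$: apply \eqref{eq:W-assumption} to $b/\|b\|_{L^1}$ and use $b\log b\le C_M\psi(\varphi)$ on $\{\varphi>M\}$. Meanwhile $\int_{\{\varphi>M\}}\psi(\varphi)\ge\frac{\psi(M)}{M}\|b\|_{L^1}$.

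Choose $M,\eta$ with $(1+M)(1+\eta)K<K_\#$. Then $G_K(q)>0$ for every $q\neq q_u$ with $\|q-q_u\|_{L^1}<\varepsilon$, uniformly for $K$ in a compact subset of $[0,K_\#)$.

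Now suppose the transition at $K_c<K_\#$ were continuous. The nonuniform minimizers $q_K$ for $K\downarrow K_c$ would then satisfy $\|q_K-q_u\|_{L^1}\to0$ by \eqref{eq:ptcont_def}. This contradicts $G_K(q_K)\le G_K(q_u)=0$. This argument replaces your Step 4 entirely and does not even use Step 2.
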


The preceding criterion does not provide a perfect dichotomy for determining the continuity of phase transitions since $q_u$ may still be a non-unique minimizer at $K=K_{\#}$. In our recent work \cite{mun_phase_2026}, we gave an example in which $K_c = K_\#$ but the phase transition is discontinuous.

\subsection{Entropy--interaction energy inequality}\label{ssec:contipt-entropy-ineq}
As advertised in the introduction, we use the constrained Lebedev--Milin inequality of \cref{prop:ops-widom} to prove the following sharp functional inequality that relates the entropy and the $\dot{H}^{-\frac12}$-seminorm. This inequality, which may be interpreted as the dual to \cref{prop:ops-widom}, is the key ingredient to the proof of \cref{thm:main pt} presented in the next subsection.

\begin{prop} \label{prop:sharp-ineq}
Let $n\ge 0$ be an integer. For every $\frac{1}{n+1}$-periodic $q\in\Pc_{ac}(\T)$, we have
\begin{equation}\label{eq:main-ineq}
\Hrel{q}{q_u}\ \ge\ (n+1)\sum_{k=1}^\infty |k|^{-1}|\wh{q}(k)|^2 = \pi(n+1)\|q\|_{\dot{H}^{-\frac12}}^2.
\end{equation}
Moreover, equality in \eqref{eq:main-ineq} holds if and only if
\begin{align}
    q(\theta) = q_{c,n}(\theta-\theta_0) \coloneqq \frac{1-c^2}{1+c^2-2c\cos(2\pi (n+1)(\theta-\theta_0))}, \quad c\in [0,1), \ \theta_0 \in \T.
\end{align}

\end{prop}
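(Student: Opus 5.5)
We prove \eqref{eq:main-ineq} by Legendre duality between entropy and the log-moment generating function, combined with \cref{prop:ops-widom}. The starting point is the Gibbs variational principle (Donsker--Varadhan): for every bounded measurable $\phi$ and every $q\in\Pc_{ac}(\T)$,
\begin{equation*}
\Hrel{q}{q_u} \ \ge\ \int_\T \phi\, dq - \log\int_\T e^{\phi}\,d\theta ,
\end{equation*}
with equality if and only if $q = e^{\phi}/\int e^\phi$.

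Assume first that $q$ is smooth, strictly positive and $\frac{1}{n+1}$-periodic. Then $\wh q(k)=0$ unless $(n+1)\mid k$. Take the test function
\begin{equation*}
\phi = 2(n+1)\sum_{k\neq 0} |k|^{-1}\wh q(k) e^{2\pi i k\theta},
\end{equation*}
which is real, smooth and again $\frac{1}{n+1}$-periodic. By the remark following \cref{prop:ops-widom}, periodicity gives the vanishing conditions $\int e^{\phi}e^{2\pi i k\theta}=0$ for $1\le k\le n$. Since $\int\phi=0$, \cref{prop:ops-widom} together with \cref{rem:H12} yields
\begin{equation*}
\log\int e^\phi \ \le\ \frac{1}{n+1}\sum_{k\ge1} k|\wh\phi(k)|^2 = 4(n+1)\sum_{k\ge1} k^{-1}|\wh q(k)|^2 .
\end{equation*}
By Parseval,
\begin{equation*}
\int\phi\,dq = 2(n+1)\sum_{k\neq0}|k|^{-1}|\wh q(k)|^2 = 4(n+1)\sum_{k\ge1}k^{-1}|\wh q(k)|^2 .
\end{equation*}
Both quantities equal the same number $4S$, where $S=(n+1)\sum_{k\ge1}k^{-1}|\wh q(k)|^2$. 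This scaling gives nothing, so the choice of $\phi$ has to be redone. Replace $\phi$ by $t\phi$. The lower bound becomes $4tS - 4t^2 S$, which is maximized at $t=1/2$ with value $S$. Hence the correct choice is
\begin{equation*}
\phi = (n+1)\sum_{k\ne0}|k|^{-1}\wh q(k)e^{2\pi ik\theta},
\end{equation*}
and it gives exactly $\Hrel{q}{q_u}\ge S$, which is \eqref{eq:main-ineq}.

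For general $\frac{1}{n+1}$-periodic $q$ with finite entropy, we argue by approximation. Mollify $q$ with the heat kernel and mix with $q_u$; both operations preserve periodicity. The entropy is lower semicontinuous and, under heat-kernel mollification, convergent. Fatou's lemma, or monotone convergence in the Fourier sum, passes the right-hand side to the limit. If $\Hrel{q}{q_u}=\infty$ there is nothing to prove.

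For the equality case, equality forces equality in both inequalities used. Equality in Gibbs gives $q=e^{\phi}/\int e^\phi$. Equality in \cref{prop:ops-widom} gives that $e^{-\phi}$ is a trigonometric polynomial of degree at most $n+1$; by periodicity it has the form $a+b\cos(2\pi(n+1)(\theta-\theta_0))$, which is positive. Thus $q\propto (a+b\cos(\cdots))^{-1}$, which after normalization is $q_{c,n}(\cdot-\theta_0)$. We must still check self-consistency: $\phi$ has to be the specific function built from $\wh q$, namely $\phi=\log q+\text{const}$. Using
\begin{equation*}
\log(1+c^2-2c\cos x) = -2\sum_{m\ge1} \frac{c^m}{m}\cos(mx)
\end{equation*}
and $\wh{q_{c,n}}((n+1)m) = c^m$, the Fourier coefficients at $k=(n+1)m$ match: $(n+1)/k\cdot c^m = c^m/m$. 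So the consistency holds for every $c\in[0,1)$. Conversely, this same computation shows that each $q_{c,n}$ attains equality.

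The main obstacle is making the equality characterization rigorous for nonsmooth $q$. If equality holds, $\phi$ need not be bounded a priori, so both Gibbs and \cref{prop:ops-widom} must be applied to possibly unbounded $\phi$ in $\dot H^{1/2}$. We handle this with truncation together with the Moser--Trudinger-type integrability $e^{\phi}\in L^1$, which follows from \cref{prop:ops-widom} itself by monotone limits. The equality cases of both inequalities then need to persist in that setting.
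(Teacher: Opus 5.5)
Your proposal is correct and follows essentially the same route as the paper: the Donsker--Varadhan bound with the test function $\wh\phi(k)=t|k|^{-1}\wh q(k)$, the constrained Lebedev--Milin inequality using $\frac{1}{n+1}$-periodicity, optimization of the scalar at $\wh\phi(k)=(n+1)|k|^{-1}\wh q(k)$, and the equality case read off from the equality cases of both inequalities. Two minor differences: you prove the converse by checking that $\phi=\log q_{c,n}+\mathrm{const}$, so both inequalities become equalities, rather than by computing both sides explicitly as the paper does; and you explicitly flag the unbounded-$\phi$ issue in the equality case for nonsmooth $q$, which the paper passes over.
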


\begin{remark}
     Since $q_u$ is an extremizer of the inequality \eqref{eq:main-ineq}, it follows by considering perturbations $q_u+\varepsilon\cos(2\pi (n+1) \theta)$, letting $\varepsilon \to 0$, and using continuity that the constant $(n+1)$ is optimal. Also, one can remove the $\frac{1}{n+1}$-periodicity assumption on $q$ and instead replace $\wh{q}$ by $\wh{q} \indic_{k\in (n+1)\Z}$ in the right-hand side of \eqref{eq:main-ineq}. The two formulations are equivalent by the Fourier support condition.
\end{remark}

\begin{remark}
    If we allow both sides of the inequality \eqref{eq:main-ineq} to be infinite, then translates of the empirical measure $\frac{1}{n+1}\sum_{j=0}^n \delta_{\frac{j}{n+1}}$ are also extremizers. Moreover, these are the only non-absolutely continuous extremizers.
\end{remark}

\begin{proof}[Proof of \Cref{prop:sharp-ineq}]
By a standard density argument, it is sufficient to prove the result for $q \in \mathcal{P}(\T) \cap C^\infty(\T)$ that is $\frac{1}{n+1}$-periodic. We recall the dual formulation of the relative entropy (Donsker--Varadhan lemma):,
\begin{align} \label{eq: entropy-dual}
\Hrel{q}{q_u}
=
\sup_{\phi}\left\{
\int_\T \phi\,q\,d\theta
-\log\left(\int_\T e^{\phi}\,d\theta\right)
\right\},
\end{align}
where the supremum is taken over all bounded measurable functions $\phi$.
Let $g \coloneqq q - q_u$, and for $t\in\R$ to be chosen, define the test function $\phi$ by $\wh{\phi}(k):= t |k|^{-1}\wh{g}(k)$. Evidently, $\phi$ is $\frac{1}{n+1}$-periodic, and therefore so is $e^{\phi}$, implying the constraint is satisfied. Using \eqref{eq: entropy-dual}, we obtain
\begin{align}
    \Hrel{q}{q_u} &\ge \int_\T \phi\,q\,d\theta - \log\int_\T e^{\phi}\,d\theta \nonumber \\
    &\ge \int_{\T} \phi\,g\,d\theta - \frac{1}{n+1} \sum_{k=1}^\infty |k| |\wh{\phi}(k)|^{2} \nonumber \\
    &= 2t\sum_{k=1}^\infty |k|^{-1}|\wh{g}(k)|^2 - \frac{t^2}{n+1}\sum_{k=1}^\infty |k|^{-1}|\wh{g}(k)|^{2}\nonumber\\
    &\ge \left(2t-\frac{t^2}{n+1}\right)\sum_{k=1}^\infty |k|^{-1}|\wh{g}(k)|^2 , \label{eq:main-ineq-2}
\end{align}
 where the second line follows from \cref{prop:ops-widom} and \cref{rem:H12} and the third follows from Parseval's identity. The function $t\mapsto 2t-\frac{t^2}{n+1}$ is maximized at $t={n+1}$. Substituting this choice into \eqref{eq:main-ineq-2}, we obtain the desired inequality.
 

If equality in \eqref{eq:main-ineq} holds, then the test function $\phi$ defined above is an extremizer of \cref{prop:ops-widom}.  Otherwise, the second line of \eqref{eq:main-ineq-2} would have a strict inequality. Therefore, $e^{-\phi}$ is a trigonometric polynomial of degree at most $n+1$. The requirement that $\int_{\T}e^{-\phi}e^{2\pi i k \theta}d\theta=0$ for all $1 \leq k \leq n$ implies that
\begin{align}
    e^{-\phi} =a_0+ a_{n+1} e^{2\pi i (n+1) \theta} + \bar{a}_{n+1} e^{-2\pi i (n+1) \theta}
\end{align}
for some $a_0>0$ and $a_{n+1} \in \mathbb{C}$. Since $\phi$ is finite, real-valued, we have $a_0 + 2 \mathrm{Re}(a_{n+1} e^{2\pi i (n+1) \theta}) > 0$ for all $\theta \in \T$. This implies that $2|a_{n+1}| <  a_0$. Translating $\theta$, we can assume without loss of generality that $a_{n+1} = |a_{n+1}|$. Since $\frac{2|a_{n+1}|}{a_0} < 1$ and $c\mapsto \frac{2c}{1+c^2}$ is a bijection from $[0,1)$ to $[0,1)$, we can write $\frac{2|a_{n+1}|}{a_0} = \frac{2c}{1+c^2}$ for a unique $c \in [0,1)$. On the other hand, equality holds in \eqref{eq: entropy-dual} if and only if $q=\frac{e^{\phi}}{\int_\T e^{\phi}\,d\theta}$. Hence,
\begin{align}
    q(\theta) \propto \frac{1}{1+c^2-2c\cos(2\pi (n+1) \theta)}.
\end{align}
Using that $\int_{\T}q\,d\theta=1$, we find that $q =\frac{1-c^2}{1+c^2-2c\cos(2\pi (n+1) \theta)}$

Conversely, if $q=\frac{1-c^2}{1+c^2-2c\cos(2\pi (n+1) \theta)}$ for some $c \in [0,1)$, then
\begin{align}
    &q(\theta)-1= \sum_{k\ne 0} c^{|k|} e^{2\pi i k (n+1) \theta}, \\
    &\phi(\theta) := -\log\left(1+c^2-2c\cos(2\pi (n+1) \theta)\right) = \sum_{k\ne 0}\frac{c^{|k|}}{|k|} e^{2\pi i k (n+1) \theta}.
\end{align}
Hence,
\begin{align}
    \|q-q_u\|_{\dot{H}^{-\frac12}}^2 &= \sum_{k\ne 0} |2\pi(n+1)k|^{-1} c^{2k} = \frac{1}{\pi(n+1)}\sum_{k=1}^\infty \frac{c^{2k}}{k} = -\frac{\log(1-c^2)}{\pi(n+1)},
\end{align}
and similarly,
\begin{align}
    \int_{\T}\log(q) q\,d\theta &= \int_{\T} \phi q\,d\theta  + \log(1-c^2)= \log(1-c^2) + \sum_{k\ne 0} \frac{c^{2|k|}}{|k|} = -\log(1-c^2).
\end{align}
This implies that $q$ is an extremizer of \eqref{eq: entropy-dual}, thereby completing the proof.

\end{proof}

\subsection{\texorpdfstring{Proof of \cref{thm:main pt}}{Proof of the main theorem}}\label{ssec:contipt-proof}

With \cref{prop:sharp-ineq} in hand, we now prove \cref{thm:main pt}.

Let $q$ be a global minimizer of $\mathcal{F}_K$. First, we claim that $q$ is $\frac{1}{n+1}$-periodic. Indeed, any critical point of $\mathcal{F}_K$ satisfies the Kirkwood--Monroe equation
\begin{equation}
    q(\theta) = \frac{1}{Z} e^{2K (W \ast q)(\theta)}, \quad Z = \int_\T e^{2K (W \ast q)(\theta)} d\theta.
\end{equation}
Since $W$ is $\frac{1}{n+1}$-periodic, the convolution $W \ast q$ is also $\frac{1}{n+1}$-periodic, which implies the claim.


Next, we write the free energy difference as
\begin{align}\label{eq:free-energy-diff}
    \mathcal{F}_K(q) - \mathcal{F}_K(q_u) &= \Hrel{q}{q_u} - K \int_\T (W \ast (q-q_u)) (q-q_u) d\theta \nonumber \\
    &= \left(\Hrel{q}{q_u} -(n+1)\sum_{k=1}^\infty |k|^{-1}|\wh{q}(k)|^2\right) + \sum_{k=1}^\infty \left((n+1)|k|^{-1}-2K\wh{W}(k)\right)|\wh{q}(k)|^2.
\end{align}
 By \Cref{prop:sharp-ineq}, the first term on the right-hand side is nonnegative with equality if and only if (modulo translation) $q=q_{c,n}$ for $c\in [0,1)$. For the second term, let
\begin{align}\label{eq:K*def}
    K_* &\coloneqq \sup\left\{K \geq 0: (n+1)|k|^{-1}-2K\wh{W}(k) \geq 0, \ \forall k\ge 1\right\} \nonumber \\
    &= \sup\left\{K\ge0:\frac{1}{\ell}-2K\wh W((n+1)\ell)\ge0,\ \forall \ell\ge 1\right\}
\end{align}
where the second line follows from the assumption $W$ is $\frac{1}{n+1}$-periodic. 
Tautologically, for any $K\le K_*$, the second term on the right-hand side of \eqref{eq:free-energy-diff} is nonnegative. If $q=q_{c,n}$ for $c\in (0,1)$, then since $|\wh{q}_{c,n} (\ell(n+1))|>0$ for all $\ell\ge 1$, we have by \cref{rem:decay-condition} a strict inequality for some $k = \ell(n+1)$ in the second term of \eqref{eq:free-energy-diff}, implying $q_{c,n}$ is not a global minimizer. Hence, $q_u$ is the unique global minimizer of $\mathcal{F}_K$ for $K \leq K_*$. Thus, $K_*\le K_c$. 

We next identify $K_*$. The decay condition implies $2\wh W((n+1)\ell)\le \ell^{-1}$ for every $\ell\in\mathbb N$, and for $\ell=1$ we have equality. Hence $K_*=1 = K_\#$. Since $K_c \leq K_\#$, we conclude that $K_c = K_* = K_\#=1$ and the phase transition is continuous.

Finally, suppose $q_*$ is a critical point of $\mathcal{F}_K$. Then the Kirkwood--Monroe equation implies the relation
\begin{align}
    \mathcal{F}_K(q) - \mathcal{F}_K(q_*) = \Hrel{q}{q_*} - K \int_\T (W \ast (q-q_*)) (q-q_*) d\theta.
\end{align}
Taking $q=q_u$ in this relation and combining with the first line of \eqref{eq:free-energy-diff} (with $q=q_*$), we obtain
\begin{align}
    \Hrel{q_*}{q_u}  + \Hrel{q_u}{q_*} = 2K \int_\T (W \ast (q_u-q_*)) (q_u-q_*) d\theta.
\end{align}
On the other hand, using the nonnegativity of the relative entropy, it follows from \cref{prop:sharp-ineq} and the decay condition \eqref{eq:decay condition} that
\begin{align}
     \int_\T (W \ast (q_u-q_*)) (q_u-q_*) d\theta\le \Hrel{q_*}{q_u} &\le \Hrel{q_*}{q_u}  + \Hrel{q_u}{q_*} \nonumber\\
     &= 2K \int_\T (W \ast (q_u-q_*)) (q_u-q_*) d\theta.
\end{align}
If $2K<1$, then it is obvious that $q_*=q_u$. If $2K=1$, then the above inequalities must be equalities, which implies that $H(q_u\vert q_*)=0$. Hence, $q_u=q_*$. This completes the proof.  


\section{Applications to mean-field models} \label{sec:applications}
In this section, we apply \cref{thm:main pt} to the three models \eqref{eq:Onsager kernel}, \eqref{transformer interaction}, \eqref{eq: HK interaction} from \cref{ssec:intro-examples}. 

\subsection{Doi--Onsager Model}\label{ssec:applications-DO}
We first prove \cref{Doi-Onsager thm} for the two-dimensional Doi--Onsager model for which $K_\# = (2\wh W(2))^{-1}$.



Evidently, $W_{\DO}$ is $\frac12$-periodic, and we recall that
\begin{align}
    \wh W_{\DO}(2\ell)=\frac{2}{\pi}\frac{1}{4\ell^2-1}, \quad \ell \geq 1.
\end{align}
For $\ell\ge2$, we have
\begin{align}
    \wh W_{\DO}(2\ell)<\frac{\wh W_{\DO}(2)}{\ell}
    \iff
    \frac{1}{4\ell^2-1}<\frac{1}{3\ell}
    \iff
    (4\ell+1)(\ell-1)>0.
\end{align}
Therefore, the rescaled interaction potential ${W_{\DO}}/{(2\wh W_{\DO}(2))}$ satisfies the assumptions of \cref{thm:main pt} with $n=1$, and the conclusion follows.

\subsection{Noisy Transformer Model}
Next, we prove \cref{transformer thm} for the noisy transformer model. 

Observe from \eqref{transformer interaction} that the nonzero Fourier coefficients of $W_\beta$ are
\begin{equation}
2\wh W_\beta(\ell)=\frac{2}{\beta}I_\ell(\beta), \quad \ell\ge1.
\end{equation}
Since for each fixed $\beta$, the map $\ell \mapsto I_\ell(\beta)$ is decreasing on $\mathbb{N}$  (see \cite{soni1965inequality}) , we have $I_\ell(\beta) \le I_1(\beta)$ for all $\ell \ge 1$. Hence,
\begin{equation}
K_\#(\beta)=\frac{1}{2\wh W_\beta(1)}=\frac{\beta}{2I_1(\beta)}.
\end{equation}
Next, observe that $I_{2}(x) - \frac{I_1(x)}{2}$ is convex for $x>0$ and is $0$ at $x=0$. We therefore define $\beta_\ast \approx 2.447$ to be the unique positive solution of
\begin{equation}\label{eq:beta-star}
    I_2(\beta_\ast)-\frac{I_1(\beta_\ast)}{2}=0.
\end{equation}
The next lemma then shows that $\frac{\wh W_\beta(\ell)}{\wh W_\beta(1)} \leq \frac{1}{\ell}$ for all $\ell \ge 1$ whenever $\beta\le\beta_\ast$.

\begin{lemma} \label{lemma: transformer}
If $\beta\le\beta_\ast$, then
\begin{equation} \label{eq:Bessel decay}
I_\ell(\beta)\le \frac{I_1(\beta)}{\ell}, \qquad\forall \ell\ge1,
\end{equation}
and the inequality is strict for all $\ell\ge3$.
\end{lemma}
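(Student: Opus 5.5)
Write $a_\ell(\beta)\coloneqq \ell I_\ell(\beta)/I_1(\beta)$. The claim is $a_\ell(\beta)\le 1$ for all $\ell\ge1$ and $\beta\le\beta_*$, with strict inequality for $\ell\ge3$. The case $\ell=1$ is trivial. For $\ell=2$ it is exactly $I_2(\beta)\le I_1(\beta)/2$ for $\beta\le\beta_*$. This follows from the discussion preceding the lemma: $I_2-\tfrac12I_1$ is convex on $(0,\infty)$, vanishes at $0$, and is negative for small $\beta$, since $I_2\sim\beta^2/8$ and $I_1\sim\beta/2$. Hence it is nonpositive on $[0,\beta_*]$. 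Equivalently, $a_2=2I_2/I_1$ is strictly increasing in $\beta$, as the footnote notes, and $a_2(\beta_*)=1$.

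For $\ell\ge3$ the plan is to reduce to $\ell=2$ by showing that $\ell\mapsto a_\ell(\beta)$ is strictly decreasing for $\ell\ge2$ whenever $\beta\le\beta_*$. This amounts to
\begin{equation*}
\frac{I_{\ell+1}(\beta)}{I_\ell(\beta)}<\frac{\ell}{\ell+1}.
\end{equation*}
The tool is a standard Bessel ratio bound (Amos type, or the elementary one obtained from the recurrence $I_{\ell-1}-I_{\ell+1}=\tfrac{2\ell}{\beta}I_\ell$):
\begin{equation*}
\frac{I_{\ell+1}(\beta)}{I_\ell(\beta)}<\frac{\beta}{\ell+1+\sqrt{(\ell+1)^2+\beta^2}}<\frac{\beta}{2(\ell+1)}.
\end{equation*}
So it suffices that $\beta/(2(\ell+1))\le \ell/(\ell+1)$, i.e.\ $\beta\le 2\ell$. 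This holds for $\ell\ge2$ because $\beta_*\approx2.447<4$. Then $a_{\ell+1}<a_\ell$ for every $\ell\ge2$, so $a_\ell<a_2\le1$ for $\ell\ge3$. This gives both \eqref{eq:Bessel decay} and the strictness claim.

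The main obstacle is justifying the ratio bound cleanly, in particular $I_{\ell+1}/I_\ell<\beta/(2(\ell+1))$ for $\beta>0$. I would prove it directly from the power series. Writing
\begin{equation*}
I_\nu(\beta)=\sum_{m\ge0}\frac{(\beta/2)^{2m+\nu}}{m!\,\Gamma(m+\nu+1)},
\end{equation*}
one has
\begin{equation*}
\frac{2(\ell+1)}{\beta}I_{\ell+1}(\beta)=\sum_{m\ge0}\frac{(\beta/2)^{2m+\ell}}{m!\,\Gamma(m+\ell+1)}\cdot\frac{\ell+1}{m+\ell+1}.
\end{equation*}
Each term is at most the corresponding term of $I_\ell(\beta)$, with strict inequality for $m\ge1$. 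This termwise comparison avoids citing Amos and gives strictness for all $\beta>0$. The endpoint $\beta=0$ is trivial, since all $I_\ell(0)=0$ for $\ell\ge1$ and the lemma is only needed for $\beta>0$. If the crude bound had not sufficed (it does, with room, since $2\ell\ge4>\beta_*$), I would fall back on the sharper Amos bound.
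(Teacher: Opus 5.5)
Your proof is correct and is essentially the paper's argument: the paper also inducts from the $\ell=2$ case (the definition of $\beta_*$), using the termwise power-series bound $I_{\ell+1}(\beta)\le \frac{\beta}{2(\ell+1)}I_\ell(\beta)$ together with $\beta\le\beta_*<4\le 2\ell$, and your rephrasing as strict decrease of $a_\ell=\ell I_\ell/I_1$ for $\ell\ge2$ is the same chain of inequalities. Your caveat that strictness fails at $\beta=0$, where all $I_\ell$ vanish, is a fair point that the paper leaves implicit.
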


\begin{proof}
We prove the lemma by induction on $\ell$. The case $\ell=1$ is trivial, and the case $\ell=2$ follows directly from the definition of $\beta_*$.

For the induction step, suppose that \eqref{eq:Bessel decay} holds for some $\ell \ge 2$. Recalling the expansion \cite[p. 77]{watson1922treatise} 
\begin{equation}
I_\ell(\beta)=\sum_{k=0}^\infty \frac{1}{k!(k+\ell)!}\left(\frac{\beta}{2}\right)^{2k+\ell},
\end{equation}
we see that
\begin{align}
I_{\ell+1}(\beta)=\sum_{k=0}^\infty \frac{1}{k!(k+\ell+1)!}\left(\frac{\beta}{2}\right)^{2k+\ell+1}
&\le \frac{\beta}{2(\ell+1)}\sum_{k=0}^\infty \frac{1}{k!(k+\ell)!}\left(\frac{\beta}{2}\right)^{2k+\ell} \nonumber\\
&=\frac{\beta}{2(\ell+1)}I_\ell(\beta) \le \frac{\beta}{2\ell}\frac{I_1(\beta)}{\ell+1},
\end{align}
where the last inequality follows from the induction hypothesis. Since $\beta\le\beta_\ast<4$ and $\ell\ge 2$, we have $\frac{\beta}{2\ell} < 1$, which completes the proof of the induction step and hence the lemma.
\end{proof}

If $\beta>\beta_\ast$, then by the definition of $\beta_*$, we have $\wh W_\beta(2)>\frac{\wh W_\beta(1)}{2}$. Thus, $\frac{W_{\beta}}{2\wh W_\beta(1)}$ satisfies the decay condition \eqref{eq:decay condition} if and only if $\beta\le\beta_\ast$. The following general lemma implies that the phase transition is discontinuous when $\beta>\beta_*$. The argument, based on a choosing a suitable perturbation around $q_u$, is similar to the bimodal case studied by the authors in \cite{mun_phase_2026}.

\begin{lemma}\label{lemma:bimodal}
If the interaction potential $W$ satisfies $2\wh W(1)=1$, $2\wh W(2) > 1/2$, and
\begin{align}
 2\wh W(k) \leq 1, \quad \forall k \ge 1,
\end{align}
then the phase transition is discontinuous at $K_c< K_\# = 1$.
\end{lemma}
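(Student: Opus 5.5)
By \cref{phase trans continuity prop}\eqref{disconti criteria}, it suffices to show that $q_u$ is not a global minimizer of $\mathcal{F}_K$ at $K=K_\#$. First we identify $K_\#$. The assumption $2\wh W(k)\le 1$ for all $k$, together with $2\wh W(1)=1$, gives $2\max_{k\in\cI}\wh W(k)=1$, hence $K_\#=1$ by \eqref{eq:Ksharp}. It then remains to exhibit some $q\in\Pc(\T)$ with $\mathcal{F}_1(q)<\mathcal{F}_1(q_u)$. The second variation at $q_u$ is degenerate along the first mode: its coefficient is $1-2K\wh W(1)=0$ at $K=1$. So we expand $\mathcal{F}_1$ to higher order along a curved perturbation, exactly as in the bimodal case of \cite{mun_phase_2026}.

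Concretely, take
\[
q_\varepsilon = 1 + 2\varepsilon\cos(2\pi\theta) + 2a\varepsilon^2\cos(4\pi\theta),
\]
with $a\in\R$ to be chosen. This is a probability density for $\varepsilon$ small. Expand $\int q_\varepsilon\log q_\varepsilon$ to fourth order using $(1+g)\log(1+g) = g + \tfrac{g^2}{2} - \tfrac{g^3}{6} + \tfrac{g^4}{12} + O(g^5)$ with $g=q_\varepsilon-1$. The interaction energy is computed exactly by Parseval:
\[
K\int (W*g)g = 2K\bigl(\wh W(1)\varepsilon^2 + \wh W(2)a^2\varepsilon^4\bigr).
\]

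Now collect orders at $K=1$. At order $\varepsilon^2$, the entropy contributes $\varepsilon^2$ and the interaction contributes $2\wh W(1)\varepsilon^2=\varepsilon^2$, so this order cancels exactly. At order $\varepsilon^3$, the only cubic term is $-\tfrac16\int g^3$, and its $\varepsilon^3$ part involves $\int (2\cos 2\pi\theta)^3 = 0$, so this order vanishes too. At order $\varepsilon^4$, the entropy terms are
\[
\tfrac12\int(2a\cos 4\pi\theta)^2 = a^2,
\qquad
-\tfrac16\cdot 3\int(2\cos 2\pi\theta)^2(2a\cos4\pi\theta) = -a,
\qquad
\tfrac1{12}\int(2\cos2\pi\theta)^4 = \tfrac12 .
\]
(Recall $\int(2\cos x)^4 = 6$, and $\int 4\cos^2(2\pi\theta)\,2a\cos(4\pi\theta) = 2a$, so the cross term is $-\tfrac16\cdot 3\cdot 2a\,\varepsilon^4 = -a\varepsilon^4$.) The interaction contributes $-2\wh W(2)a^2$. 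Writing $w=2\wh W(2)$, this gives
\[
\mathcal{F}_1(q_\varepsilon)-\mathcal{F}_1(q_u) = \varepsilon^4\Bigl[(1-w)a^2 - a + \tfrac12\Bigr] + O(\varepsilon^5).
\]

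It remains to make the bracket negative. If $w<1$, the bracket is a quadratic in $a$ minimized at $a=\frac{1}{2(1-w)}$, with minimum value
\[
\tfrac12 - \frac{1}{4(1-w)},
\]
which is negative exactly when $w>\tfrac12$, i.e. by hypothesis. If $w=1$, the bracket equals $\tfrac12-a$, so any $a>\tfrac12$ makes it negative. Either way, for small $\varepsilon$ we get $\mathcal{F}_1(q_\varepsilon)<\mathcal{F}_1(q_u)$, and \cref{phase trans continuity prop}\eqref{disconti criteria} yields $K_c<K_\#=1$ together with discontinuity.

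The main obstacle is bookkeeping the quartic expansion correctly: the cross term $-a$ and the pure quartic term $\tfrac12$ decide the sign. A secondary point is justifying the $O(\varepsilon^5)$ remainder. This is immediate here because $g=O(\varepsilon)$ uniformly, so the Taylor expansion of $(1+g)\log(1+g)$ is uniform, and the interaction energy of a trigonometric polynomial is computed exactly.
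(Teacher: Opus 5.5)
Your proposal is correct and takes the same approach as the paper. You use the same two-mode ansatz: yours is the paper's $q_\varepsilon = 1+\varepsilon\cos(2\pi\theta)+c\varepsilon^2\cos(4\pi\theta)$ with $\varepsilon$ replaced by $2\varepsilon$ and $c$ by $a/2$, so your quartic bracket is exactly $16\,p(a/2)$. You then make the same case split between $2\wh W(2)=1$ and $2\wh W(2)<1$, and conclude via the same appeal to \cref{phase trans continuity prop}\eqref{disconti criteria}.
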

\begin{proof}
    First, note from \eqref{eq:Ksharp} that $K_\# = 1$. For $0<\varepsilon\ll 1$, consider
    \begin{align}
        q_{\varepsilon}(\theta) = q_u(1+ {\varepsilon} \cos (2\pi\theta) + c\varepsilon^{2} \cos (4\pi\theta)),
    \end{align}
    where $c \in \mathbb{R}$ will be chosen momentarily. A direct expansion gives 
    \begin{align}
        \Hrel{q_{\varepsilon}}{q_u} = \frac14 \varepsilon^2 + \left( \frac{c^2}{4} - \frac{c}{8} + \frac{1}{32}\right)\varepsilon^4 + O(\varepsilon^5),\quad \int_\T (W \ast q_{\varepsilon}) \, q_{\varepsilon} \, d\theta = \frac{1}{4} \varepsilon^2 + \frac{2\wh W(2)}{4}c^2\varepsilon^4.
    \end{align}
    Therefore, at $K = 1$,
\begin{equation}
    \mathcal{F}_{K}(q_{\varepsilon}) - \mathcal{F}_{K}(q_u) = \underbrace{\left( \frac14 \left(1 - 2\wh W(2)\right)c^{2} - \frac{c}{8} + \frac{1}{32}\right)}_{\eqqcolon p(c)}\varepsilon^4 + O(\varepsilon^5).
\end{equation}
     If $2\wh W(2)=1$, then it is obvious that we may choose $c$ so that $p(c)<0$. If $2\wh W(2)<1$, then $p$ has a global minimizer at $c_* = \frac{1}{4(1-2\wh W(2))} > 0$ and $p(c_*) = \frac{1-4\wh W(2)}{64(1-2\wh W(2))} < 0$ by our assumption that $4\wh{W}(2)>1$. Choosing $p(c)<0$, it now follows that $\mathcal{F}_K(q_\varepsilon) - \mathcal{F}_K(q_u) <0$ for sufficiently small $\varepsilon>0$. Thus, $q_u$ is not a global minimizer of $\mathcal{F}_{K}$ at $K= K_\#$, and the discontinuity of the phase transition follows from \cref{phase trans continuity prop}\eqref{disconti criteria}.
\end{proof}

We now complete the proof of \cref{transformer thm} by combining \Cref{lemma: transformer,lemma:bimodal}.
\begin{proof}[Proof of \cref{transformer thm}]
If $\beta\le\beta_\ast$, then \cref{lemma: transformer} shows that the normalized coefficients satisfy the decay condition \eqref{eq:decay condition}, so \cref{thm:main pt} and the rescaling remark after \eqref{eq: W-multimodal} yield that the phase transition is continuous and satisfies $K_c(\beta)=K_\#(\beta)$. If instead $\beta>\beta_\ast$, then the normalized coefficients satisfy the assumptions of \cref{lemma:bimodal}, and hence the phase transition is discontinuous at $K_c(\beta)<K_\#(\beta)$.
\end{proof}

\subsection{Hegselmann--Krause Model}
Finally, we prove \cref{HK thm} for the Hegselmann--Krause model.

From \eqref{eq: HK interaction}, we recall that 
\begin{align}
\wh W_R(\ell) = \frac{2}{\pi \ell^3} (\ell R - \sin \ell R), \qquad \ell \ge 1.
\end{align}
Since $g(x) \coloneqq x-\sin x$ is strictly positive for $x>0$, it follows that $\wh W_R(\ell) >0$ for all $\ell\ge 1$. The next lemma shows that $\wh W_R$ is maximized at $\ell=1$, and hence that $K_\#(R) = (2\wh W_R(1))^{-1}$.

\begin{lemma} \label{lemma: HK monotone}
For every $R>0$ and $\ell\ge1$, we have 
    $\wh W_R(\ell) \le \wh W_R(1)$.
\end{lemma}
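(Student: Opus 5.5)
The plan is to reduce the claim to a monotonicity statement for a single function of one variable. Set $g(x)=x-\sin x$ and $h(x)\coloneqq g(x)/x^3$ for $x>0$. By the formula for $\wh W_R(\ell)$ recalled above,
\begin{align*}
\wh W_R(\ell)=\frac{2}{\pi\ell^3}\,g(\ell R)=\frac{2R^3}{\pi}\,h(\ell R),
\qquad
\wh W_R(1)=\frac{2R^3}{\pi}\,h(R).
\end{align*}
Since $\ell R\ge R$ for $\ell\ge1$, the inequality $\wh W_R(\ell)\le\wh W_R(1)$ follows once we know that $h$ is nonincreasing on $(0,\infty)$. This works for every $R>0$, not only $R\in[0,\pi]$.

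To show $h$ is nonincreasing, differentiate:
\begin{align*}
h'(x)=\frac{x(1-\cos x)-3(x-\sin x)}{x^4}=\frac{k(x)}{x^4},
\qquad
k(x)\coloneqq 3\sin x-2x-x\cos x.
\end{align*}
It therefore suffices to prove $k(x)\le0$ for all $x>0$. The argument splits into two ranges.

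On $(0,\pi]$ we use a convexity argument. We have $k(0)=0$, and
\begin{align*}
k'(x)=2\cos x-2+x\sin x,\qquad k'(0)=0,\qquad k''(x)=x\cos x-\sin x.
\end{align*}
We claim $k''\le0$ on $(0,\pi]$. On $(0,\pi/2)$ this is the inequality $\tan x\ge x$. On $[\pi/2,\pi]$ it holds because $\cos x\le 0$ while $\sin x\ge0$. Integrating twice from $0$, first $k'\le0$ and then $k\le0$ on $(0,\pi]$.

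For $x\ge\pi$ a crude bound suffices. Using $\sin x\le 1$ and $-x\cos x\le x$, we get $k(x)\le 3-2x+x=3-x<0$, because $\pi>3$.

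Hence $h'\le0$ on $(0,\infty)$, so $h$ is nonincreasing, and the lemma follows. The only delicate point is the sign of $k''$ near $\pi/2$, and it is handled by splitting the interval as above.
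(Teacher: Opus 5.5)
Your proof is correct, but it takes a different route from the paper. The paper avoids calculus altogether. It writes $g(\ell R)=\ell\int_0^R(1-\cos \ell t)\,dt=2\ell\int_0^R\sin^2\frac{\ell t}{2}\,dt$ and bounds the integrand pointwise using $|\sin(m u)|\le m|\sin u|$ for integers $m\ge 0$. This gives $g(\ell R)\le \ell^3 g(R)$ in one line, with no case analysis. That argument genuinely uses that $\ell$ is an integer, since the pointwise inequality fails for non-integer $m$.

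Your argument instead shows that $x\mapsto (x-\sin x)/x^3$ is nonincreasing on $(0,\infty)$, which is a stronger statement. It gives $\wh W_R(\ell+1)\le \wh W_R(\ell)$ for every $\ell$, so the whole coefficient sequence is monotone, not just bounded by its first term. The paper's trick, applied at $jR$, only compares an index $j$ with its integer multiples $mj$. The price is the sign analysis of $k(x)=3\sin x-2x-x\cos x$.

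All your computations check out. Two small remarks: $k''\le 0$ is concavity rather than convexity, and you can skip the split of $(0,\pi]$ for $k''$ by noting that $(k'')'(x)=-x\sin x\le 0$ there while $k''(0)=0$.
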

\begin{proof}
    Write $\wh W_R(\ell) = \frac{4}{\pi} \, \frac{g(\ell R)}{\ell^{3}}$. Since $g(x) = \int_{0}^{x} (1- \cos t) \, dt$, we obtain
    \begin{align}
        g(\ell R) = \ell \int_0^{R} (1- \cos \ell t) \, dt = 2\ell \int_0^{R} \sin^{2} \frac{\ell t}{2} \, dt 
        &\leq 2\ell^{3} \int_0^{R} \sin^{2} \frac{t}{2} \, dt = \ell^{3} g(R),
    \end{align}
    where we used the elementary inequality $|\sin(mu)| \leq m|\sin u|$ for any integer $m\ge 0$. The desired conclusion now follows from reexpressing $\wh W_R$ in terms of $g$.
\end{proof}

The function $\varphi(R):=R - (\sin R)(2-\cos R)$ satisfies $\varphi(0)=0$, has a global minimizer on $[0, \pi]$ at $R=\pi/2$, and satisfies $\varphi'(R)<0$ for $R\in (0,\pi/2)$ and $\varphi'(R)>0$ for $R\in (\pi/2, \pi)$; therefore, it follows from the intermediate value theorem that $\varphi$ has a unique zero $R_* \approx 2.139$ in $(0, \pi)$. Now observe that
\begin{align}
    \frac{\wh W_R(2)}{\wh W_R(1)} = \frac{1}{2} \iff R = (\sin R)(2-\cos R).
\end{align}
For $R < R_*$, we have $\wh W_R(2)/\wh W_R(1) > \frac{1}{2}$, and thus \cref{lemma:bimodal} applied to the normalized interaction potential $W_{R}/(2\wh W_R(1))$ implies that the phase transition is discontinuous at $K_c(R) < K_\#(R)$. It remains to show that the normalized coefficients satisfy the decay condition \eqref{eq:decay condition} when $R \geq R_*$. This will then complete the proof of \cref{HK thm}.

\begin{lemma}\label{lemma: HK decay}
If $R \geq R_*$, then
\begin{align}
\wh W_R(\ell) \leq \frac{\wh W_R(1)}{\ell}, \quad \forall \ell \geq 1,
\end{align}
and the inequality is strict for every $\ell \geq 3$.
\end{lemma}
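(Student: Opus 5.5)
The plan is to reduce the claim to an elementary inequality for $g(x) = x-\sin x$ and split into the cases $\ell = 2$ and $\ell\ge 3$. Since $\wh W_R(\ell) = \frac{4}{\pi}\frac{g(\ell R)}{\ell^3}$, the inequality $\wh W_R(\ell)\le \wh W_R(1)/\ell$ is equivalent to
\begin{align}
    g(\ell R) \le \ell^2 g(R), \qquad \ell\ge 1,\ R\in[R_*,\pi].
\end{align}
The case $\ell=1$ is trivial.

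For $\ell = 2$ I will use the double-angle formula. The inequality becomes $2R - 2\sin R\cos R \le 4R - 4\sin R$, which rearranges to $R \ge (\sin R)(2-\cos R)$, i.e.\ $\varphi(R)\ge 0$. This holds on $[R_*,\pi]$ by the sign analysis of $\varphi$ given just before the lemma. Since $\varphi<0$ on $(0,R_*)$ and $R_*>\pi/2$, $\varphi$ is increasing on $[R_*,\pi]$ with $\varphi(R_*)=0$, which gives $\varphi\ge 0$ there.

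For $\ell\ge 3$ I will only use the crude bound $g(\ell R)\le \ell R+1$, so it suffices to show $h_\ell(R) := \ell^2(R-\sin R) - \ell R - 1 > 0$ on $[R_*,\pi]$. I first record a lower bound on $R_*$. Since $\varphi(2) = 2 - (\sin 2)(2-\cos 2) <0$ (numerically $\approx -0.2$), we get $R_*>2$.
\begin{itemize}
\item[($\ell=3$)] Here $h_3(R) = 6R - 9\sin R - 1$. Its derivative $6-9\cos R$ is positive for $R\ge \pi/2$, so $h_3(R)\ge h_3(2) = 11 - 9\sin 2 > 11 - 9 = 2>0$.
\item[($\ell\ge4$)] Since $g$ is increasing, $g(R)\ge g(2) = 2-\sin 2 > 1$, using $\sin 2 < 1$. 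Hence $h_\ell(R) \ge \ell(\ell g(R) - R) - 1 \ge \ell(\ell - \pi) - 1 \ge 4(4-\pi)-1 > 0$.
\end{itemize}
In both cases the inequality is strict, which gives the claimed strictness for $\ell\ge 3$.

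The main obstacle is making these numeric checks rigorous without a calculator, in particular certifying $\varphi(2)<0$, i.e.\ $(\sin 2)(2-\cos 2)>2$. I would use the Taylor bounds $\sin 2 \ge 2 - \tfrac{8}{6} + \tfrac{32}{120} - \tfrac{128}{5040} > 0.9$ and $\cos 2 \le 1 - 2 + \tfrac{16}{24} < -0.33$. These give $(\sin 2)(2-\cos 2) > 0.9\cdot 2.33 > 2$. Everything else in the argument only needs $\sin 2<1$.
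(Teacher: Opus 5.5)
Your proof is correct, but it takes a different route from the paper.

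\textbf{The paper's route.} The paper argues by induction on $\ell$. With $\Phi_\ell(R)=\ell^2 g(R)-g(\ell R)$, it bounds the increment $\Phi_{\ell+1}-\Phi_\ell$ from below by the $\ell$-independent function $J(R)=5g(R)-R-2\sin\frac R2$. This uses $\sin((\ell+1)R)-\sin(\ell R)\ge -2\sin\frac R2$ and $\ell\ge 2$. It then shows $J>0$ on $[R_*,\pi]$ via $J'(R)=(9-10\cos\frac R2)(1+\cos\frac R2)$.

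\textbf{Your route.} You avoid induction entirely. The case $\ell=2$ is handled exactly, since it is literally $\varphi(R)\ge 0$. All $\ell\ge 3$ are handled at once by the crude bound $g(\ell R)\le \ell R+1$, which suffices once you know $R_*>2$. Your Taylor certification of $\varphi(2)<0$ is sound: the sine and cosine series at $2$ have decreasing terms from the relevant index on, so the truncations give valid one-sided bounds.

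\textbf{Comparison.} Your argument is shorter, and its only numerical input is explicit and rigorously certified. The paper's induction yields the extra structural fact that $\Phi_\ell(R)$ is increasing in $\ell\ge 2$. However, it does not really avoid numerics. The paper justifies $J>0$ by writing $J(R)>J(0)=0$, but this does not follow from $J'>0$ on $(R_*,\pi)$. Indeed $J'<0$ when $\cos\frac R2>\frac{9}{10}$, and $J(\pi/2)=2\pi-5-\sqrt2<0$. To close that step one needs $J(R_*)>0$. This follows from three facts: $J(2)=8-5\sin 2-2\sin 1>0$, monotonicity of $J$ on $[\pi/2,\pi]$, and precisely your bound $R_*>2$. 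So the quantitative lower bound on $R_*$ that you make explicit is also what the paper's argument implicitly requires.
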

\begin{proof}[Proof of \cref{lemma: HK decay}]
We prove the lemma by induction on $\ell$. The cases $\ell=1,2$ are immediate. Assume $R \geq R_*$, and observe that
\begin{align}
2\wh W_R(\ell) \leq \frac{2\wh W_R(1)}{\ell} &\iff g(\ell R) \leq \ell^2 g(R)\iff \Phi_\ell(R) \coloneqq \ell^2 g(R) - g(\ell R) \geq 0.
\end{align}

For the induction step, suppose that $\Phi_\ell(R) \geq 0$ for some $\ell \geq 2$. Then,
\begin{align} \label{eq:Phi diff}
\Phi_{\ell+1}(R) - \Phi_\ell(R) &= (2\ell +1)g(R) - R + \bigl(\sin((\ell+1)R) - \sin (\ell R)\bigr) \nonumber\\
&\geq (2\ell +1)g(R) - R  - 2 \sin \frac{R}{2} \nonumber\\
&\geq 5g(R) - R  - 2 \sin \frac{R}{2},
\end{align}
where the first inequality follows from
\begin{align}
    \sin((\ell+1)R) - \sin (\ell R) = 2 \cos((\ell + 1/2)R) \sin\frac{R}{2} \geq -2 \sin \frac{R}{2},
\end{align}
and the second inequality follows from $\ell \geq 2$. Setting $J(R) \coloneqq 5g(R) - R - 2 \sin(R/2)$, a direct computation yields
\begin{equation}
 J'(R) =\left(9-10\cos\frac{R}{2}\right)\left(1+\cos\frac{R}{2}\right).
\end{equation}
Since $(R_*, \pi) \subset (\frac\pi2, \pi)$, we have $\cos(R/2) < 9/10$, and hence $J'(R)>0$ on $(R_*, \pi)$. Therefore, $J(R) > J(0) = 0$, and hence $\Phi_{\ell+1}(R) \geq \Phi_\ell(R)$. This proves the claimed statement. The strict inequality for $\ell \geq 3$ follows from the strict monotonicity in the argument above.
\end{proof}

\bibliographystyle{alpha}
\bibliography{References}

\end{document}